\newlength{\aufzleft}
\newenvironment{aufz}{\begin{list}{}{\setlength{\listparindent}{0pt}\setlength{\itemsep}{\topsep}\setlength{\labelwidth}{3.2ex}\setlength{\aufzleft}{\labelsep}\addtolength{\aufzleft}{\labelwidth}\setlength{\leftmargin}{\aufzleft}}}{\end{list}}
\newenvironment{aufz2}{\begin{list}{}{\setlength{\listparindent}{0pt}\setlength{\itemsep}{\topsep}\setlength{\labelwidth}{8ex}\setlength{\aufzleft}{\labelsep}\addtolength{\aufzleft}{\labelwidth}\setlength{\leftmargin}{\aufzleft}}}{\end{list}}
\newtheoremstyle{par}{1ex}{2ex}{\rm}{}{\bfseries}{}{0.8em}{\thmnumber{(#2)}}
\newtheoremstyle{thm}{1ex}{2ex}{\itshape}{}{\bfseries}{}{0.9em}{\thmnumber{(#2)}\thmname{ #1}\thmnote{ (#3)}}
\newtheoremstyle{ex}{1ex}{2ex}{\rm}{}{\bfseries}{}{0.8em}{\thmnumber{(#2)}\thmname{ #1}}
\theoremstyle{par}
\newtheorem{no}{}[section]
\theoremstyle{thm}
\newtheorem{lemma}[no]{Lemma}
\newtheorem{prop}[no]{Proposition}
\newtheorem{cor}[no]{Corollary}
\theoremstyle{ex}
\newtheorem{exas}[no]{Examples}
\newtheorem{qus}[no]{Questions}
\newcommand{\N}{\mathbbm{N}}
\newcommand{\Z}{\mathbbm{Z}}
\newcommand{\hm}[3]{{\rm Hom}_{#1}(#2,#3)}
\newcommand{\ia}{\mathfrak{a}}
\newcommand{\ib}{\mathfrak{b}}
\newcommand{\ic}{\mathfrak{c}}
\newcommand{\ip}{\mathfrak{p}}
\newcommand{\iq}{\mathfrak{q}}
\newcommand{\im}{\mathfrak{m}}
\newcommand{\inn}{\mathfrak{n}}
\newcommand{\catmod}{{\sf Mod}}
\newcommand{\assf}{\ass^{\rm f}}
\newcommand{\Id}{{\rm Id}}
\newcommand{\ilim}{\varinjlim}
\DeclareMathOperator{\ass}{Ass}
\DeclareMathOperator{\var}{Var}
\DeclareMathOperator{\nzd}{NZD}
\DeclareMathOperator{\zd}{ZD}
\DeclareMathOperator{\spec}{Spec}
\DeclareMathOperator{\supp}{Supp}
\DeclareMathOperator{\ke}{Ker}
\DeclareMathOperator{\nil}{Nil}
\begin{document}

\newcommand{\leadingzero}[1]{\ifnum #1<10 0\the#1\else\the#1\fi}
\renewcommand{\today}{\the\year\leadingzero{\month}\leadingzero{\day}}

\title{Assassins and torsion functors}
\author{Fred Rohrer}
\address{Grosse Grof 9, 9470 Buchs, Switzerland}
\email{fredrohrer@math.ch}
\subjclass[2010]{Primary 13C12; Secondary 13D30, 13D45}
\keywords{Torsion functor, assassin, weak assassin, non-noetherian ring, well-centered torsion theory, weakly proregular ideal}

\begin{abstract}
Let $R$ be a ring, let $\ia\subseteq R$ be an ideal, and let $M$ be an $R$-module. Let $\Gamma_\ia$ denote the $\ia$-torsion functor. Conditions are given for the (weakly) associated primes of $\Gamma_\ia(M)$ to be the (weakly) associated primes of $M$ containing $\ia$, and for the (weakly) associated primes of $M/\Gamma_\ia(M)$ to be the (weakly) associated primes of $M$ not containing $\ia$.
\end{abstract}

\maketitle\thispagestyle{fancy}


\section{Introduction}

Let $R$ be a ring\footnote{Throughout what follows, rings are understood to be commutative. In general, notation and terminology follow Bourbaki's \textit{\'El\'ements de math\'e\-matique.}}, let $\ia\subseteq R$ be an ideal, and let $M$ be an $R$-module. We denote by $\Gamma_\ia(M)=\bigcup_{n\in\N}(0:_M\ia^n)$ the $\ia$-torsion submodule of $M$. The $\ia$-torsion functor $\Gamma_\ia$, and especially its right derived cohomological functor, i.e., local cohomology with respect to $\ia$, are important tools in commutative algebra and algebraic geometry. They are mostly studied in case the ring $R$ is noetherian. If $R$ is not noetherian, then these functors may behave differently to what is known from the noetherian case. For example, the functors $\Gamma_\ia$ and $\Gamma_{\sqrt{\ia}}$ need not be equal (\cite{r}), and $\Gamma_\ia(I)$ need not be injective for an injective $R$-module $I$ (\cite{qr}). So, when trying to extend the theory of local cohomology to not necessarily noetherian rings, one has to be careful, and some basic results on local cohomology need additional hypotheses in order to still hold. Another approach, for supporting ideals of finite type, is via the notion of weak proregularity as explained in \cite{lipman} (cf. \ref{wr}).

In this article, we study the interplay between torsion functors and assassins. Let $\ass_R(M)$ denote the assassin of $M$, i.e., the set of prime ideals of $R$ of the form $(0:_Rx)$ for some $x\in M$. It is well-known that noetherianness of $R$ implies the following nice relations between the assassins of $\Gamma_\ia(M)$, $M$ and $M/\Gamma_\ia(M)$:
\begin{aufz2}
\item[(A)] $\ass_R(\Gamma_\ia(M))=\ass_R(M)\cap\var(\ia)$;
\item[(B)] $\ass_R(M/\Gamma_\ia(M))=\ass_R(M)\setminus\var(\ia)$.
\end{aufz2}
Our first goal is to investigate these relations in case $R$ is non-noetherian. Our main results are as follows. Statement (A) and the inclusion ``$\supseteq$'' in (B) hold without any hypothesis (\ref{b10}). Statement (B) need not hold in general (\ref{b70}, \ref{b80}), but it does so in the following cases: i) $M$ is noetherian (\ref{b50}); ii) $\ia$ is principal and idempotent, and $M=R$ (\ref{c10}); iii) $\ia$ and all associated primes of $M/\Gamma_\ia(M)$ are of finite type (\ref{g40}).\smallskip

\chead{\footnotesize Assassins and torsion functors}

In non-noetherian situations, assassins are often not so well-behaved, in contrast to weak assassins. The weak assassin of $M$, denoted by $\assf_R(M)$, is the set of prime ideals of $R$ that are minimal primes of ideals of the form $(0:_Rx)$ for some $x\in M$. Our second goal is to investigate the analogues to (A) and (B) for weak assassins:
\begin{aufz2}
\item[(A$'$)] $\assf_R(\Gamma_\ia(M))=\assf_R(M)\cap\var(\ia)$;
\item[(B$'$)] $\assf_R(M/\Gamma_\ia(M))=\assf_R(M)\setminus\var(\ia)$.
\end{aufz2}
Here, our main results are as follows. The inclusion ``$\subseteq$'' in (A$'$) and the inclusion ``$\supseteq$'' in (B$'$) hold without any hypothesis (\ref{b10}), and (B$'$) implies (A$'$) (\ref{b30}). Neither (A$'$) nor (B$'$) holds in general (\ref{b70}, \ref{b80}). Statement (A$'$) holds if $\ia$ is of finite type (\ref{f20}). Statement (B$'$) holds if $\ia$ is principal (\ref{f40}), and if $\ia$ has a finite generating family $(a_i)_{i=1}^n$ such that $(a_i)_{i=1}^m$ is weakly proregular for every $m\in[1,n-1]$ and that $a_i$ is weakly proregular for every $i\in[2,n]$ (\ref{f60}).\smallskip

If $\Gamma_\ia$ is a radical (e.g., if $\ia$ is of finite type), then it defines a torsion theory. This torsion theory is well-centered in the sense of Cahen (\cite{cahen1}) if and only if (A$'$) holds for any $R$-module $M$ (\ref{d35}).\smallskip

Finally, let us note that there is another way to sweeten a non-noetherian situation. Namely, instead of $\Gamma_\ia$ we can consider the left exact radical $\overline{\Gamma}_\ia$ defined by $\overline{\Gamma}_\ia(M)=\{x\in M\mid\supp_R(\langle x\rangle_R)\subseteq\var(\ia)\}$. It coincides with $\Gamma_\ia$ if $\ia$ is of finite type, but not in general. We will pursue the study of assassins and weak assassins of $\overline{\Gamma}_\ia(M)$ and $M/\overline{\Gamma}_\ia(M)$ in a subsequent work.


\section{Preliminaries}

\noindent\textit{Throughout this section, let $R$ be a ring, let $\ia\subseteq R$ be an ideal, and let $M$ be an $R$-module.}

\begin{no}
We denote by $\nil(R)$ the nilradical of $R$, by $\var(\ia)$ the set of prime ideals of $R$ containing $\ia$, by $\min(\ia)$ the set of $\subseteq$-minimal elements of $\var(\ia)$, by $\nzd_R(M)$ and $\zd_R(M)$ the sets of non-zerodivisors and zerodivisors on $M$, resp., and by $\supp_R(M)$ the support of $M$.
\end{no}

\begin{no}
A prime ideal $\ip\subseteq R$ is said to be \textit{associated to $M$} if there exists $x\in M$ with $\ip=(0:_Rx)$; it is said to be \textit{weakly associated to $M$} if there exists $x\in M$ with $\ip\in\min(0:_Rx)$. The sets $\ass_R(M)$ and $\assf_R(M)$ of associated and weakly associated primes of $M$ are called \textit{the assassin of $M$} and \textit{the weak assassin of $M$,} resp.\smallskip
\end{no}

If not indicated otherwise, the following facts about assassins and weak assassins are proven in \cite[IV.1.1; IV.1 Exercise 17]{ac} and \cite[00L9; 0546]{stacks}.

\begin{no}\label{a20}
A) We have $\{\ip\in\assf_R(M)\mid\ip\text{ is of finite type}\}\subseteq\ass_R(M)\subseteq\assf_R(M)$; if $R$ or $M$ is noetherian, then $\ass_R(M)=\assf_R(M)$.\smallskip

B) If $\ip\in\spec(R)$, then $\ass_R(R/\ip)=\assf_R(R/\ip)=\{\ip\}$.\smallskip

C) We have $M=0$ if and only if $\assf_R(M)=\emptyset$.\smallskip

D) If $0\rightarrow L\rightarrow M\rightarrow N\rightarrow 0$ is a short exact sequence of $R$-modules, then \[\ass_R(L)\subseteq\ass_R(M)\subseteq\ass_R(L)\cup\ass_R(N)\] and \[\assf_R(L)\subseteq\assf_R(M)\subseteq\assf_R(L)\cup\assf_R(N).\]

E) If $(M_i)_{i\in I}$ is a filtering inductive system of $R$-modules, then \[\{\ip\in\ass_R(\ilim_{i\in I}M_i)\mid\ip\text{ is of finite type}\}\subseteq\bigcup_{i\in I}\ass_R(M_i),\] as follows from the proof of \cite[2.10]{coupek}.\smallskip

F) Let $S\subseteq R$ be a subset, and let $\eta\colon M\rightarrow S^{-1}M$ denote the canonical morphism. Then, $\assf_R(\ke(\eta))=\{\ip\in\assf_R(M)\mid\ip\cap S=\emptyset\}$, and there is a bijection \[\{\ip\in\assf_R(M)\mid\ip\cap S=\emptyset\}\overset{\cong}\longrightarrow\assf_{S^{-1}R}(S^{-1}M),\;\ip\mapsto S^{-1}\ip.\]

G) We have $\supp_R(M)=\{\ip\in\spec(R)\mid\exists\iq\in\assf_R(M):\iq\subseteq\ip\}$. If $N\subseteq M$ is a sub-$R$-module, then $\supp_R(M)=\supp_R(N)\cup\supp_R(M/N)$ (\cite[II.4.4 Proposition 16]{ac}).\smallskip

H) Suppose $M$ is of finite type, and let $N$ be a further $R$-module. In \cite[IV.1.4 Proposition 10]{ac}, it is shown that if $R$ is noetherian, then \[\ass_R(\hm{R}{M}{N})=\ass_R(N)\cap\supp_R(M).\] A careful analysis of the proof reveals that the noetherian hypothesis is superfluous, and moreover that $\assf_R(\hm{R}{M}{N})\subseteq\assf_R(N)\cap\supp_R(M)$. This last inclusion need not be an equality, as can be seen from \cite[Example 4]{yassemi}.
\end{no}

If not indicated otherwise, the following facts about torsion functors are proven in \cite[Chapters 1--2]{bs} (where one has to check that the hypothesis of a noetherian ring is not used) and \cite[Chapter 1]{bfr}.

\begin{no}\label{tor}
A) The $\ia$-torsion functor, denoted by $\Gamma_\ia$, is the subfunctor of $\Id_{\catmod(R)}$ with $\Gamma_\ia(M)=\bigcup_{n\in\N}(0:_M\ia^n)$ for every $R$-module $M$. It is left exact.\smallskip

B) A subfunctor $F$ of $\Id_{\catmod(R)}$ is called a \textit{radical} if $F(M/F(M))=0$ for every $R$-module $M$. The functor $\Gamma_\ia$ need not be a radical, but it is so if $\ia$ is of finite type. Moreover, if $M$ is noetherian, then $\Gamma_\ia(M/\Gamma_\ia(M))=0$.\smallskip

C) Suppose that $M$ is noetherian. Then, there exists $n\in\N$ with $\ia^n\Gamma_\ia(M)=0$. If moreover $\Gamma_\ia(M)=0$, then $\nzd_R(M)\cap\ia\neq\emptyset$.\smallskip

D) Let $S\subseteq R$ be a subset. The canonical monomorphism of $S^{-1}R$-modules $S^{-1}\Gamma_\ia(M)\rightarrowtail\Gamma_{S^{-1}\ia}(S^{-1}M)$ need not be an isomorphism, but it is so if $\ia$ is of finite type.\smallskip

E) If $\Gamma_\ia$ is a radical and $\hm{R}{M}{N}=0$ for every $\ia$-torsionfree $R$-module $N$, then $\Gamma_\ia(M)=M$. (Indeed, we consider the short exact sequence \[0\longrightarrow\Gamma_\ia(M)\overset{h}\longrightarrow M\overset{l}\longrightarrow M/\Gamma_\ia(M)\longrightarrow 0.\] Then, $\Gamma_\ia(M/\Gamma_\ia(M))=0$, hence $l=0$, thus $h$ is an isomorphism, and therefore $\Gamma_\ia(M)=M$.)\smallskip

F) There is a canonical monomorphism of $R$-modules \[M/\Gamma_\ia(M)\rightarrowtail\ilim_{n\in\N}\hm{R}{\ia^n}{M}.\]
\end{no}

We recall the notion of weak proregularity of an ideal and refer the reader to \cite{lipman}, \cite{yekutieli} and \cite{schenzel} for details.

\begin{no}\label{wr}
A) Suppose $\ia$ is of finite type. Let $\mathbf{a}=(a_i)_{i=1}^n$ be a finite generating family of $\ia$. The right derived cohomological functor of $\Gamma_\ia$ is denoted by $(H_\ia^i)_{i\in\Z}$ and called \textit{local cohomology with respect to $\ia$.} \v{C}ech cohomology with respect to $\mathbf{a}$ yields a further cohomological functor, denoted by $(\check{H}^i(\mathbf{a},\bullet))_{i\in\Z}$. There is a canonical isomorphism $\Gamma_\ia(\bullet)\cong\check{H}^0(\mathbf{a},\bullet)$ that can be canonically extended to a morphism of $\delta$-functors $\gamma_{\mathbf{a}}\colon(H_\ia^i(\bullet))_{i\in\Z}\rightarrow(\check{H}^i(\mathbf{a},\bullet))_{i\in\Z}$. The sequence $\mathbf{a}$ is called \textit{weakly proregular} if $\gamma_{\mathbf{a}}$ is an isomorphism. The ideal $\ia$ is called \textit{weakly proregular} if it has a weakly proregular finite generating family, and this holds if and only if every finite generating family of $\ia$ is weakly proregular.\smallskip

B) If $\ia$ is weakly proregular and $i\in\N^*$, then $H_\ia^i\circ\Gamma_\ia=0$, as follows immediately from the definitions of weak proregularity and \v{C}ech cohomology.
\end{no}


\section{Basic relations, first examples, and counterexamples}

\noindent\textit{Throughout this section, let $R$ be a ring, let $\ia\subseteq R$ be an ideal, and let $M$ be an $R$-module.}

\begin{prop}\label{b10}
a) $\ass_R(\Gamma_\ia(M))=\ass_R(M)\cap\var(\ia)$.

b) $\assf_R(\Gamma_\ia(M))\subseteq\assf_R(M)\cap\var(\ia)$.

c) $\ass_R(M/\Gamma_\ia(M))\supseteq\ass_R(M)\setminus\var(\ia)$.

d) $\assf_R(M/\Gamma_\ia(M))\supseteq\assf_R(M)\setminus\var(\ia)$.
\end{prop}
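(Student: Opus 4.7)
The four inclusions all rest on two elementary observations, which I would state and use throughout:
(i) if $x\in M$ satisfies $\ia^n x=0$ for some $n\in\N$ and $\ip\subseteq R$ is any prime containing $(0:_R x)$, then $\ia^n\subseteq\ip$ and hence $\ia\subseteq\ip$;
(ii) if $\ia\not\subseteq\ip$ and $\ia^n r\subseteq\ip$ for some $n\in\N$, then $r\in\ip$, since $\ip$ is prime.

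For (a) and (b), I would handle both inclusions ``$\subseteq$'' simultaneously. Given $\ip\in\assf_R(\Gamma_\ia(M))$ (resp.\ $\ass_R$), write $\ip\in\min(0:_R x)$ (resp.\ $\ip=(0:_R x)$) with $x\in\Gamma_\ia(M)$. Since $\Gamma_\ia(M)\hookrightarrow M$, item \ref{a20}\,D gives $\ip\in\assf_R(M)$ (resp.\ $\ass_R(M)$), and observation (i) applied to $x$ gives $\ia\subseteq\ip$. This proves (b) and the ``$\subseteq$'' half of (a). For the reverse inclusion in (a), take $\ip=(0:_R x)\in\ass_R(M)$ with $\ia\subseteq\ip$; then $\ia x=0$, so $x\in(0:_M\ia)\subseteq\Gamma_\ia(M)$, and therefore $\ip\in\ass_R(\Gamma_\ia(M))$.

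For (c) and (d), I would again treat both cases in parallel. Suppose $\ip\in\assf_R(M)\setminus\var(\ia)$ (resp.\ $\ass_R(M)\setminus\var(\ia)$), witnessed by $x\in M$, and write $\bar x\in M/\Gamma_\ia(M)$ for the image of $x$. The first step is to note $\bar x\neq 0$: otherwise $x\in\Gamma_\ia(M)$ would force $\ia\subseteq\ip$ by (i), contradicting $\ip\notin\var(\ia)$. Next I would compute $(0:_R\bar x)$: the inclusion $(0:_R x)\subseteq(0:_R\bar x)$ is clear, and conversely if $r\bar x=0$ then $rx\in\Gamma_\ia(M)$, so $\ia^n r\subseteq(0:_R x)\subseteq\ip$ for some $n$, whence $r\in\ip$ by (ii). Hence $(0:_R\bar x)\subseteq\ip$. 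In the associated-prime case this already gives $(0:_R\bar x)=\ip$, proving (c).

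For (d), it remains to upgrade this to minimality. If $\iq$ is a prime with $(0:_R\bar x)\subseteq\iq\subseteq\ip$, then \textit{a fortiori} $(0:_R x)\subseteq\iq\subseteq\ip$, and the minimality of $\ip$ over $(0:_R x)$ forces $\iq=\ip$. Thus $\ip\in\min(0:_R\bar x)$, i.e.\ $\ip\in\assf_R(M/\Gamma_\ia(M))$. I do not foresee a genuine obstacle here; the only point that requires care is the verification of minimality in (d), which is handled cleanly by inserting the inclusion $(0:_R x)\subseteq(0:_R\bar x)$ between the two annihilators to reduce to the given minimality of $\ip$ over $(0:_R x)$.
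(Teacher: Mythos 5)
Your proof is correct. For a) and b) you argue exactly as the paper does: the containment in $\ass_R(M)$ resp.\ $\assf_R(M)$ comes from $\Gamma_\ia(M)\subseteq M$ via \ref{a20} D), the containment in $\var(\ia)$ from $\ia^n\subseteq(0:_Rx)\subseteq\ip$, and the reverse inclusion in a) from the witness $x$ with $\ia x=0$. Where you diverge is in c) and d): the paper gets these as immediate consequences of a), b) together with the other half of \ref{a20} D), namely $\ass_R(M)\subseteq\ass_R(\Gamma_\ia(M))\cup\ass_R(M/\Gamma_\ia(M))$ and its weak analogue, since a prime of $\ass_R(M)\setminus\var(\ia)$ cannot lie in the first set; you instead verify membership directly, showing for a witness $x$ of $\ip$ that $(0:_Rx)\subseteq(0:_R\overline{x})\subseteq\ip$ (using primality of $\ip$ and $\ia\not\subseteq\ip$ to absorb the $\ia$-power), which forces $(0:_R\overline{x})=\ip$ in the associated case and, by squeezing any intermediate prime between $(0:_Rx)$ and $\ip$, minimality in the weakly associated case. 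Your route is more elementary and self-contained -- it only uses the ``submodule'' half of \ref{a20} D) and in effect re-proves the relevant instance of the quotient half, while also exhibiting an explicit witness $\overline{x}$ in $M/\Gamma_\ia(M)$; the paper's route is shorter because it delegates that work to the cited general fact about short exact sequences. Both arguments are complete, and your minimality check in d) is handled correctly.
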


\begin{proof}
We have $\ass_R(\Gamma_\ia(M))\subseteq\ass_R(M)$ and $\assf_R(\Gamma_\ia(M))\subseteq\assf_R(M)$ by \ref{a20} D). For $\ip\in\assf_R(\Gamma_\ia(M))$ there exists $x\in\Gamma_\ia(M)$ with $(0:_Rx)\subseteq\ip$, hence there exists $n\in\N$ with $\ia^nx=0$, and it follows that $\ia^n\subseteq(0:_Rx)\subseteq\ip$, thus $\ia\subseteq\ip$. By \ref{a20} A) this shows \[\ass_R(\Gamma_\ia(M))\subseteq\assf_R(\Gamma_\ia(M))\subseteq\var(\ia).\] So, we have proven b) and the inclusion ``$\subseteq$'' in a).

For $\ip\in\ass_R(M)\cap\var(\ia)$ there exists $x\in M$ with $\ia\subseteq\ip=(0:_Rx)$, implying $\ia x=0$, hence $x\in\Gamma_\ia(M)$, and thus $\ip\in\ass_R(\Gamma_\ia(M))$. This proves the inclusion ``$\supseteq$'' in a).

By \ref{a20} D) we have \[\ass_R(M)\subseteq\ass_R(\Gamma_\ia(M))\cup\ass_R(M/\Gamma_\ia(M))\] and \[\assf_R(M)\subseteq\assf_R(\Gamma_\ia(M))\cup\assf_R(M/\Gamma_\ia(M)).\] Thus, c) and d) follow immediately from a) and b).
\end{proof}

\begin{no}\label{b20}
The $R$-module $M$ is called \textit{$\ia$-fair} if \[\ass_R(M/\Gamma_\ia(M))=\ass_R(M)\setminus\var(\ia);\] it is called \textit{weakly $\ia$-quasifair} if \[\assf_R(\Gamma_\ia(M))=\assf_R(M)\cap\var(\ia),\] and \textit{weakly $\ia$-fair} if \[\assf_R(M/\Gamma_\ia(M))=\assf_R(M)\setminus\var(\ia).\]
\end{no}

\begin{prop}\label{b30}
If $M$ is weakly $\ia$-fair, then it is weakly $\ia$-quasifair.
\end{prop}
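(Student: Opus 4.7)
The plan is to observe that most of the work has already been done, and the proof essentially amounts to combining the definitions with the short exact sequence
\[
0 \longrightarrow \Gamma_\ia(M) \longrightarrow M \longrightarrow M/\Gamma_\ia(M) \longrightarrow 0
\]
together with part b) of Proposition \ref{b10}.

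First I would recall that by Proposition \ref{b10} b) the inclusion $\assf_R(\Gamma_\ia(M))\subseteq\assf_R(M)\cap\var(\ia)$ always holds, so only the reverse inclusion requires proof under the hypothesis that $M$ is weakly $\ia$-fair. Take an arbitrary $\ip\in\assf_R(M)\cap\var(\ia)$. Applying the weak-assassin part of \ref{a20} D) to the displayed short exact sequence yields
\[
\assf_R(M)\subseteq\assf_R(\Gamma_\ia(M))\cup\assf_R(M/\Gamma_\ia(M)),
\]
so $\ip$ lies in at least one of the two sets on the right.

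Next I would rule out the second possibility: if $\ip$ were in $\assf_R(M/\Gamma_\ia(M))$, then by weak $\ia$-fairness of $M$ we would have $\ip\in\assf_R(M)\setminus\var(\ia)$, contradicting $\ip\in\var(\ia)$. Hence $\ip\in\assf_R(\Gamma_\ia(M))$, which establishes the missing inclusion and therefore equality.

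There is no real obstacle here: the statement is a formal consequence of the previously proven inclusions, the behaviour of weak assassins under short exact sequences, and the definition of weak $\ia$-fairness. The only thing to be careful about is making sure one uses the weak-assassin version of \ref{a20} D) rather than the ordinary assassin version, so that the argument does not silently require $\ip$ to be of finite type.
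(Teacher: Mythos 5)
Your argument is correct and is essentially the paper's proof: the paper performs the same computation set-theoretically (intersecting the inclusion from \ref{a20} D) with $\var(\ia)$ and using weak $\ia$-fairness to eliminate the $\assf_R(M/\Gamma_\ia(M))$ term), while you phrase it element-wise, and both conclude with \ref{b10} b). No gaps.
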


\begin{proof}
By \ref{a20} D) we have \[\assf_R(M)\cap\var(\ia)\subseteq\assf_R(\Gamma_\ia(M))\cup(\assf_R(M/\Gamma_\ia(M))\cap\var(\ia))=\]\[\assf_R(\Gamma_\ia(M))\cup((\assf_R(M)\setminus\var(\ia))\cap\var(\ia))=\assf_R(\Gamma_\ia(M)).\] So, the claim follows from \ref{b10} b).
\end{proof}

\begin{exas}\label{b40}
A) Every $R$-module is $R$-fair and weakly $R$-fair.\smallskip

B) If $\ia$ is nilpotent, then every $R$-module is $\ia$-fair and weakly $\ia$-fair.\smallskip

C) Every $\ia$-torsion $R$-module is $\ia$-fair and weakly $\ia$-fair.\smallskip

D) If $\ip\in\spec(R)$, then the $R$-module $R/\ip$ is $\ia$-fair and weakly $\ia$-fair, as follows on use of \ref{a20} B). In particular, if $R$ is integral, then the $R$-module $R$ is $\ia$-fair and weakly $\ia$-fair.\smallskip

E) Suppose $\ia\subseteq\nil(R)$. Then, $M$ is $\ia$-fair if and only if $\ass_R(M/\Gamma_\ia(M))=\emptyset$, and this is fulfilled if $\Gamma_\ia$ is a radical. Furthermore, $M$ is weakly $\ia$-fair if and only if $\assf_R(M/\Gamma_\ia(M))=\emptyset$, and, by \ref{a20} C), this holds if and only if $M$ is an $\ia$-torsion $R$-module.
\end{exas}

\begin{prop}\label{b45}
If every monogeneous $R$-module is weakly $\ia$-quasifair, $\ia$-fair, or weakly $\ia$-fair, resp., then so is every $R$-module.
\end{prop}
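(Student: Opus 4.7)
The plan is to localize each of the three statements to a single cyclic submodule that "witnesses" a given (weakly) associated prime. The key observation is that the torsion functor respects intersections with submodules: for any $x\in M$, the identity
\[
\Gamma_\ia(\langle x\rangle_R)=\langle x\rangle_R\cap\Gamma_\ia(M)
\]
holds, because both sides consist exactly of those elements of $\langle x\rangle_R$ annihilated by some power of $\ia$. This yields, in particular, a canonical monomorphism $\langle x\rangle_R/\Gamma_\ia(\langle x\rangle_R)\rightarrowtail M/\Gamma_\ia(M)$, under which the class of $x$ is preserved with the same annihilator.

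For the weakly $\ia$-quasifair case, thanks to \ref{b10} b) only the inclusion ``$\supseteq$'' needs proof. Given $\ip\in\assf_R(M)\cap\var(\ia)$, pick $x\in M$ with $\ip\in\min(0:_R x)$; then $\ip\in\assf_R(\langle x\rangle_R)\cap\var(\ia)$, so the hypothesis on $\langle x\rangle_R$ gives $\ip\in\assf_R(\Gamma_\ia(\langle x\rangle_R))$, and the inclusion $\Gamma_\ia(\langle x\rangle_R)\subseteq\Gamma_\ia(M)$ together with \ref{a20} D) yields $\ip\in\assf_R(\Gamma_\ia(M))$.

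For the ($\ia$-fair) and (weakly $\ia$-fair) cases, the nontrivial inclusion is ``$\subseteq$'' (the other one is \ref{b10} c), d)). Take $\ip\in\ass_R(M/\Gamma_\ia(M))$, so $\ip=(0:_R\overline{y})$ for some $y\in M$ (resp. $\ip\in\min(0:_R\overline{y})$ in the weak case). Pulling back along the monomorphism $\langle y\rangle_R/\Gamma_\ia(\langle y\rangle_R)\rightarrowtail M/\Gamma_\ia(M)$, the class of $y$ in the smaller quotient has the same annihilator $(0:_R\overline{y})$, hence $\ip\in\ass_R(\langle y\rangle_R/\Gamma_\ia(\langle y\rangle_R))$ (resp. $\ip\in\assf_R$ of the same quotient). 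Applying the hypothesis on $\langle y\rangle_R$ gives $\ip\in\ass_R(\langle y\rangle_R)\setminus\var(\ia)\subseteq\ass_R(M)\setminus\var(\ia)$ (resp. the weak-assassin version), again using \ref{a20} D) for the last inclusion.

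The main thing to verify cleanly is the identity $\Gamma_\ia(\langle x\rangle_R)=\langle x\rangle_R\cap\Gamma_\ia(M)$ and the resulting fact that $(0:_R \overline{y})$ is unchanged when one restricts from $M/\Gamma_\ia(M)$ to $\langle y\rangle_R/\Gamma_\ia(\langle y\rangle_R)$; beyond these routine observations the proof is a direct application of the hypothesis to the monogeneous module $\langle y\rangle_R$, so I do not expect any genuine obstacle.
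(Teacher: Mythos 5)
Your proposal is correct and follows essentially the same route as the paper: reduce to the cyclic submodule $\langle x\rangle_R$ generated by a witness of the given (weakly) associated prime, using that $\Gamma_\ia(\langle x\rangle_R)=\langle x\rangle_R\cap\Gamma_\ia(M)$, so that $\langle x\rangle_R/\Gamma_\ia(\langle x\rangle_R)$ embeds in $M/\Gamma_\ia(M)$ with the class of $x$ keeping its annihilator, and then apply the hypothesis together with \ref{a20} D) and \ref{b10}. The paper phrases the embedding as the isomorphism $\langle x\rangle_R/\Gamma_\ia(\langle x\rangle_R)\cong\langle\overline{x}\rangle_R\subseteq M/\Gamma_\ia(M)$, which is the same observation you make explicit.
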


\begin{proof}
Let $M$ be an $R$-module. Suppose that every monogeneous $R$-module is $\ia$-fair or weakly $\ia$-fair, resp. Let $\ip\in\ass_R(M/\Gamma_\ia(M))$ or $\ip\in\assf_R(M/\Gamma_\ia(M))$, resp. There exists $x\in M$ with $\ip=(0:_R\overline{x})$ or $\ip\in\min(0:_R\overline{x})$, resp., where $\overline{x}$ denotes the canonical image of $x$ in $M/\Gamma_\ia(M)$. Then, $\langle x\rangle_R/\Gamma_\ia(\langle x\rangle_R)\cong\langle\overline{x}\rangle_R\subseteq M/\Gamma_\ia(M)$, and thus \[\ip\in\ass_R(\langle\overline{x}\rangle_R)=\ass_R(\langle x\rangle)\setminus\var(\ia)\subseteq\ass_R(M)\setminus\var(\ia)\] or \[\ip\in\assf_R(\langle\overline{x}\rangle_R)=\assf_R(\langle x\rangle)\setminus\var(\ia)\subseteq\assf_R(M)\setminus\var(\ia),\] resp. by \ref{a20} D). Now, \ref{b10} c), d) implies that $M$ is $\ia$-fair or weakly $\ia$-fair, resp.

Suppose that every monogeneous $R$-module is weakly $\ia$-quasifair. Let $\ip\in\assf_R(M)\cap\var(\ia)$. There exists $x\in M$ with $\ip\in\min(0:_Rx)$. It follows that \[\ip\in\assf_R(\langle x\rangle_R)\cap\var(\ia)=\assf_R(\Gamma_\ia(\langle x\rangle_R))\subseteq\assf_R(\Gamma_\ia(M))\] by \ref{tor} A) and \ref{a20} D), and so \ref{b10} b) implies that $M$ is weakly $\ia$-quasifair.
\end{proof}

\begin{prop}\label{b50}
If $R$ or $M$ is noetherian, then $M$ is $\ia$-fair and weakly $\ia$-fair.
\end{prop}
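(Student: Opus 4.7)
The plan is to reduce both assertions to the case where the module $M$ itself is noetherian, and then prove \textit{fairness} directly. By \ref{b45}, it suffices to check both properties on monogeneous $R$-modules; in the case $R$ is noetherian, any monogeneous $R$-module is noetherian, so both hypotheses reduce to the assumption ``$M$ noetherian''. Under this assumption, $M/\Gamma_\ia(M)$ is also noetherian, so \ref{a20} A) yields $\ass_R(\bullet)=\assf_R(\bullet)$ on both $M$ and $M/\Gamma_\ia(M)$; hence weak $\ia$-fairness will follow at once from $\ia$-fairness.

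To show that a noetherian $M$ is $\ia$-fair, the inclusion ``$\supseteq$'' is supplied by \ref{b10} c). For ``$\subseteq$'', I would take $\ip\in\ass_R(M/\Gamma_\ia(M))$ and write $\ip=(0:_R\bar{x})$ for some $\bar{x}\neq 0$, lifted by $x\in M$. The first point is that $\ia\not\subseteq\ip$: since $M$ is noetherian, \ref{tor} B) gives $\Gamma_\ia(M/\Gamma_\ia(M))=0$, so $\bar{x}$ cannot be $\ia$-torsion, ruling out $\ia\subseteq\ip$. The second, and more delicate, point is to produce a witness in $M$ itself for the prime $\ip$.

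For this, I would invoke \ref{tor} C) to obtain $n\in\N$ with $\ia^n\Gamma_\ia(M)=0$. Since $\ia\not\subseteq\ip$ and $\ip$ is prime, $\ia^n\not\subseteq\ip$, so I can pick $b\in\ia^n\setminus\ip$ and set $y:=bx\in M$. Then I expect $\ip=(0:_R y)$: on the one hand, for $r\in\ip$ we have $rx\in\Gamma_\ia(M)$ (as $\ip \bar{x}=0$), whence $ry=b(rx)\in\ia^n\Gamma_\ia(M)=0$; on the other hand, if $ry=0$ then $rb\bar{x}=0$, so $rb\in\ip$, and $b\notin\ip$ forces $r\in\ip$. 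In particular $y\neq 0$ (since $b\notin(0:_R\bar{x})=\ip$), so $\ip\in\ass_R(M)$, completing the proof.

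The main obstacle is precisely this second step: in general, for a short exact sequence $0\to L\to M\to N\to 0$ one only has $\ass_R(M)\subseteq\ass_R(L)\cup\ass_R(N)$, not $\ass_R(N)\subseteq\ass_R(M)$. The twin facts from \ref{tor} B), C)—that $\Gamma_\ia$ is a radical on $M$ and that $\ia$ acts nilpotently on $\Gamma_\ia(M)$—are what let us manufacture the element $y=bx$ whose annihilator is exactly $\ip$; both of these facts rely essentially on the noetherian hypothesis.
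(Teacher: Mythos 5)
Your proof is correct and takes essentially the same route as the paper: reduce via \ref{b45} and \ref{a20} A) to the case of a noetherian $M$, then use $\ia^n\Gamma_\ia(M)=0$ from \ref{tor} C) to exhibit an element of $M$ (a multiple of a lift of the witness by an element of $\ia^n\setminus\ip$) whose annihilator is exactly $\ip$. The only, harmless, variation is that you obtain $\ia\not\subseteq\ip$ directly from the radical property \ref{tor} B) and then pick $b\in\ia^n\setminus\ip$ by primeness, whereas the paper takes $b=x^n$ for a non-zerodivisor $x\in\ia\setminus\ip$ supplied by the second half of \ref{tor} C), which your argument does not need.
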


\begin{proof}
By \ref{a20} A) it suffices to show that $M$ is $\ia$-fair, and by \ref{b45} it suffices to consider the case that $M$ is noetherian. Let $\ip\in\ass_R(M/\Gamma_\ia(M))$. Since $M$ is noetherian, we have $\Gamma_\ia(M/\Gamma_\ia(M))=0$ by \ref{tor} B), so since $M/\Gamma_\ia(M)$ is noetherian, too, there exists $x\in\nzd_R(M/\Gamma_\ia(M))\cap\ia$ by \ref{tor} C). As $\ip\subseteq\bigcup\ass_R(M/\Gamma_\ia(M))\subseteq\zd_R(M/\Gamma_\ia(M))$, it follows that $x\in\ia\setminus\ip$, and therefore $\ip\notin\var(\ia)$. Furthermore, there exists $v\in M$ such that, setting $\overline{v}=v+\Gamma_\ia(M)\in M/\Gamma_\ia(M)$, we have $\ip=(0:_R\overline{v})$, hence $\ip\overline{v}=0$, and thus $\ip v\subseteq\Gamma_\ia(M)$. Since $M$ is noetherian, there exists $n\in\N$ with $\ia^n\Gamma_\ia(M)=0$ by \ref{tor} C), implying $\ip x^nv=x^n\ip v\subseteq x^n\Gamma_\ia(M)=0$, and thus $\ip\subseteq(0:_Rx^nv)$. If $a\in(0:_Rx^nv)$, then $(ax^n)v=a(x^nv)=0\in\Gamma_\ia(M)$, hence $ax^n\overline{v}=0$, thus $ax^n\in(0:_R\overline{v})=\ip$, and as $x\notin\ip$ it follows that $a\in\ip$. Therefore, $(0:_Rx^nv)\subseteq\ip$, hence $\ip=(0:_Rx^nv)$, and thus $\ip\in\ass_R(M)$. The claim follows now from \ref{b10} c).
\end{proof}

\begin{prop}\label{b60}
Let $R$ be $0$-dimensional and local with maximal ideal $\im$.

a) The $R$-module $R$ is weakly $\im$-quasifair if and only if $\Gamma_\im(R)\neq 0$.

b) The $R$-module $R$ is weakly $\im$-fair if and only if $\im$ is nilpotent.

c) If $\Gamma_\im(R)=\im$, then the $R$-module $R$ is weakly $\im$-quasifair, but neither $\im$-fair nor weakly $\im$-fair.

d) If $\Gamma_\im(R)=0$, then the $R$-module $R$ is $\im$-fair, but not weakly $\im$-quasifair.
\end{prop}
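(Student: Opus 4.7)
The plan is to exploit the observation that a $0$-dimensional local ring has exactly one prime, namely $\im$, so $\var(\im)=\{\im\}$ and $\spec(R)\setminus\var(\im)=\emptyset$. Consequently, for any $R$-module $N$, both $\ass_R(N)$ and $\assf_R(N)$ are subsets of $\{\im\}$, and their intersections with $\var(\im)$ collapse to themselves while their differences from $\var(\im)$ vanish. By \ref{a20} C), $\assf_R(N)=\{\im\}$ if $N\neq 0$ and $\assf_R(N)=\emptyset$ otherwise, so the weak-assassin conditions reduce to the question of whether certain modules vanish. This should make everything routine.

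For part a), $R$ is weakly $\im$-quasifair if and only if $\assf_R(\Gamma_\im(R))=\assf_R(R)\cap\var(\im)=\{\im\}$, and by the vanishing criterion this is equivalent to $\Gamma_\im(R)\neq 0$. For part b), $R$ is weakly $\im$-fair if and only if $\assf_R(R/\Gamma_\im(R))=\emptyset$, i.e., $\Gamma_\im(R)=R$; unwinding this amounts to $1\in\Gamma_\im(R)$, which occurs precisely when $\im^n=0$ for some $n$.

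For part c), assume $\Gamma_\im(R)=\im$. Since $1\notin\im$, one has $\Gamma_\im(R)\neq R$, hence by b) $R$ is not weakly $\im$-fair (and so, since $\im\neq 0$—otherwise $R$ would be a field with $\Gamma_\im(R)=R\neq 0$—one has $\Gamma_\im(R)\neq 0$, so by a) $R$ is weakly $\im$-quasifair). To see that $R$ is not $\im$-fair, observe that $R/\Gamma_\im(R)=R/\im$ is a field, hence $\ass_R(R/\Gamma_\im(R))=\{\im\}$ by \ref{a20} B), whereas $\ass_R(R)\setminus\var(\im)=\emptyset$. For part d), suppose $\Gamma_\im(R)=0$. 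The main point is to check that $\ass_R(R)=\emptyset$: any $\ip\in\ass_R(R)$ equals $\im$, coming from some $x$ with $\im=(0:_Rx)$; but then $\im x=0$ gives $x\in\Gamma_\im(R)=0$, forcing $(0:_Rx)=R$, a contradiction. Since $R/\Gamma_\im(R)=R$, this yields $\ass_R(R/\Gamma_\im(R))=\emptyset=\ass_R(R)\setminus\var(\im)$, so $R$ is $\im$-fair; but $\assf_R(\Gamma_\im(R))=\emptyset\neq\{\im\}=\assf_R(R)\cap\var(\im)$, so $R$ is not weakly $\im$-quasifair.

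There is no real obstacle here; the only thing that requires care is tracking the trivial cases (nonzero local ring, $\im=0$ versus $\im\neq 0$) to ensure the equivalences in a), b) come out correctly when applied in c), d).
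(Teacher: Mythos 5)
Your proof is correct and follows essentially the same route as the paper: since $\spec(R)=\{\im\}$, all (weak) assassins are $\emptyset$ or $\{\im\}$, so a) and b) reduce via the criterion $\assf_R(N)=\emptyset\Leftrightarrow N=0$, c) follows from a), b) and $\ass_R(R/\im)=\{\im\}$, and d) from the observation that $\Gamma_\im(R)=0$ forces $\ass_R(R)=\emptyset$ (the paper gets this via \ref{b10} a), you unwind it directly, which is equivalent). Your explicit handling of the trivial cases ($R\neq 0$, $\im\neq 0$ in c)) is a sound minor refinement of what the paper leaves implicit.
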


\begin{proof}
a) The $R$-module $R$ is weakly $\im$-quasifair if and only if $\assf_R(\Gamma_\im(R))=\{\im\}$, hence if and only if $\assf_R(\Gamma_\im(R))\neq\emptyset$, and thus, by \ref{a20} C), if and only if $\Gamma_\im(R)\neq 0$.

b) The $R$-module $R$ is weakly $\im$-fair if and only if $\assf_R(R/\Gamma_\im(R))=\emptyset$, thus, by \ref{a20} C), if and only $R/\Gamma_\im(R)=0$, and hence if and only if $\im$ is nilpotent.

c) The $R$-module $R$ is weakly $\im$-quasifair by a). As $\ass_R(R)\setminus\var(\im)=\assf_R(R)\setminus\var(\im)=\emptyset$, we get, on use of \ref{a20} A), \[\{\im\}\subseteq\ass_R(R/\im)=\ass_R(R/\Gamma_\im(R))\subseteq\assf_R(R/\Gamma_\im(R))\subseteq\{\im\},\] and therefore the remaining claims.

d) As $\Gamma_\im(R)=0$, we have $\im\notin\ass_R(R)\subseteq\var(\im)$. On use of \ref{b10} a) we thus get \[\ass_R(R)\setminus\var(\im)=\emptyset=\ass_R(\Gamma_\im(R))=\]\[\ass_R(R)\cap\var(\im)=\ass_R(R)=\ass_R(R/\Gamma_\im(R)).\] Hence, the $R$-module $R$ is $\im$-fair. By a), it is not weakly $\ia$-quasifair.
\end{proof}

\begin{prop}\label{b70}
There exist a ring $R$, an ideal $\ia\subseteq R$, and an $R$-module $M$ such that $M$ is weakly $\ia$-quasifair, but neither $\ia$-fair nor weakly $\ia$-fair.\footnote{This shows that the converse of \ref{b30} need not hold.}
\end{prop}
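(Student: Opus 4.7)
The plan is to invoke Proposition \ref{b60} c), which reduces the problem to constructing a $0$-dimensional local ring $(R,\im)$ with $\Gamma_\im(R) = \im \neq 0$; then setting $\ia = \im$ and $M = R$ produces the required triple.

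For the construction I would fix a field $k$ and set
\[
R = k[x_i : i \in \N^*]/I, \qquad I = \bigl(\{x_ix_j : i \neq j\} \cup \{x_i^{i+1} : i \in \N^*\}\bigr),
\]
writing $\im$ for the ideal generated by the residues of the $x_i$. The motivating idea is that the relations $x_ix_j = 0$ for $i \neq j$ force every nonzero monomial of $R$ to be a pure power $x_i^k$ with $1 \le k \le i$, so that individual elements of $\im$ are annihilated by suitable powers of $\im$, while the family $\{x_n^n\}_{n \in \N^*}$ prevents $\im$ as a whole from being nilpotent.

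The verification then splits into three steps. First, $R/\im \cong k$, and since every $x_i$ is nilpotent, every prime of $R$ contains $\im$; hence $(R,\im)$ is $0$-dimensional local. Second, $x_n^n$ is a nonzero element of $\im^n$ for each $n$ (the relations in $I$ kill only $x_n^m$ for $m \ge n+1$), so $\im$ is not nilpotent; consequently no unit is $\im$-torsion, giving $\Gamma_\im(R) \subseteq \im$. Third, and this is the core of the argument, I would prove $\im \subseteq \Gamma_\im(R)$: given $y \in \im$ involving only $x_1,\ldots,x_M$, one observes that $\im^M$ is generated by the pure powers $x_i^k$ with $k \ge M$, and that the product of such a generator with a monomial $x_j^a$ occurring in $y$ vanishes either because $i \neq j$ (so $x_ix_j = 0$) or because $i = j \le M \le k$ forces $k + a \ge j + 1$, whence $x_j^{k+a} = 0$; thus $\im^M y = 0$.

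The main obstacle is this third step, since the construction must simultaneously render $\im$ non-nilpotent and every element of $\im$ $\im$-torsion — exactly what the two families of relations in $I$ are tailored to achieve together. Once the identity $\Gamma_\im(R) = \im$ is established, Proposition \ref{b60} c) delivers the statement at once.
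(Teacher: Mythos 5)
Your proposal is correct and follows essentially the same route as the paper: the same monomial ring $k[x_i]/(\{x_ix_j\}_{i\neq j}\cup\{x_i^{i+1}\})$, the same key computation that $\Gamma_\im(R)=\im$ (the paper does this by computing $(0:_R\im^n)$ explicitly, you by showing $\im^M y=0$ for $y$ involving only $x_1,\dots,x_M$ while $x_n^n\neq 0$ keeps $\im$ non-nilpotent), and the same final appeal to \ref{b60} c) with $\ia=\im$, $M=R$.
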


\begin{proof}
Let $K$ be a field and let \[R=K[(X_i)_{i\in\N}]/\langle\{X_iX_j\mid i,j\in\N,i\neq j\}\cup\{X_i^{i+1}\mid i\in\N\}\rangle_{K[(X_i)_{i\in\N}]}.\] For $i\in\N$ we denote by $Y_i$ the image of $X_i$ in $R$. Let $\im=\langle Y_i\mid i\in\N\rangle_R$. Then, $R$ is a $0$-dimensional local ring with maximal ideal $\im$. If $n\in\N$, then $\im^n=\langle Y_i^n\mid i\geq n\rangle_R$, hence $(0:_R\im^n)=\langle Y_i^{\max\{1,i-n+1\}}\mid i\in\N\rangle_R$, and thus $\Gamma_{\im}(R)=\im$. Now, setting $\ia=\im$ and $M= R$, the claim follows from \ref{b60}.
\end{proof}

\begin{prop}\label{b80}
There exist a ring $R$, an ideal $\ia\subseteq R$, and an $R$-module $M$ such that $M$ is $\ia$-fair, but not weakly $\ia$-quasifair.
\end{prop}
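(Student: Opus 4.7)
The plan is to reduce everything to Proposition \ref{b60} d) by exhibiting a $0$-dimensional local ring $(R,\im)$ with $\Gamma_\im(R)=0$, and then take $\ia=\im$, $M=R$. Since in a $0$-dimensional local ring $\im$ is the unique prime ideal, we automatically have $\zd_R(R)\subseteq\im$ and $\spec(R)=\{\im\}$, so $\im\in\assf_R(R)\cap\var(\im)$ while $\assf_R(\Gamma_\im(R))=\assf_R(0)=\emptyset$, giving the failure of weak $\im$-quasifairness, whereas $\ass_R(R)\setminus\var(\im)=\emptyset=\ass_R(R/\Gamma_\im(R))\setminus\emptyset$ on rewriting, establishing $\im$-fairness.

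The main technical task is therefore the construction of $(R,\im)$ with $\Gamma_\im(R)=0$. A naive nilpotent-style construction (as in \ref{b70}) will not work, because there typically $\Gamma_\im(R)\neq 0$. Instead, I would take a field $K$ and form the $K$-algebra $R$ with $K$-basis $\{1\}\cup\{X_q\mid q\in\mathbb{Q}\cap(0,1)\}$ and multiplication determined by $X_q\cdot X_{q'}=X_{q+q'}$ if $q+q'<1$ and $X_q\cdot X_{q'}=0$ otherwise. Associativity is routine (any triple product vanishes as soon as the exponent-sum reaches $1$, symmetrically in the grouping). Setting $\im=\langle X_q\mid q\in\mathbb{Q}\cap(0,1)\rangle_R$, every element outside $\im$ has nonzero constant term and is therefore a unit (its noninvertible part lies in $\im\subseteq\nil(R)$), so $R$ is local with maximal ideal $\im$; since each $X_q$ is nilpotent ($X_q^n=0$ once $nq\geq 1$) we have $\im=\nil(R)$, making $R$ $0$-dimensional.

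The decisive verification is that $(0:_R\im)=0$, which already implies $\Gamma_\im(R)=0$: the identity $X_q=X_{q/2}\cdot X_{q/2}$ shows $\im^n=\im$ for every $n\geq 1$, so $\Gamma_\im(R)=(0:_R\im)$. To see that this annihilator is trivial, take a nonzero $x=a+\sum_{i=1}^k a_iX_{q_i}\in R$ with $a,a_i\in K$ and $0<q_1<\cdots<q_k<1$. If $a\neq 0$, pick any $q\in\mathbb{Q}\cap(0,1-q_k)$; then $xX_q=aX_q+\sum_{i}a_iX_{q_i+q}$ is a nonzero $K$-linear combination of distinct basis elements. If $a=0$, then some $a_i\neq 0$, and the same choice of $q$ (possible because $q_k<1$) yields $xX_q=\sum_{i}a_iX_{q_i+q}\neq 0$ by linear independence.

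I expect the main obstacle to be precisely this last step: guaranteeing that the exponents $q_i+q$ remain strictly less than $1$ so the products do not collapse to zero. The choice $q<1-q_k$ resolves this, but it relies crucially on using the open interval $(0,1)$ rather than $(0,1]$; with a closed upper endpoint, elements like $X_1$ would lie in $(0:_R\im)$ and wreck the argument. Once $(R,\im)$ is in hand, the conclusion is immediate from \ref{b60} d).
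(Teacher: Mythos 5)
Your proposal is correct, and at the strategic level it is the same as the paper's: both reduce to Proposition \ref{b60} d) by producing a $0$-dimensional local ring whose maximal ideal has zero torsion, and both take that ring to be (up to isomorphism) a truncation of the monoid algebra of the nonnegative rationals. The difference is in presentation: the paper forms $K[Q]$, localises at $\im=\langle e_\alpha\mid\alpha>0\rangle$ to get a valuation ring $S$ (citing \cite[2.2]{qr}), and then passes to $T=S/\langle e_1/1\rangle$, so the verification $\Gamma_\ip(T)=0$ has to be carried out with fractions and a leading-term analysis of an equation $tfe_\alpha=ge_1$. You instead work directly with $K[Q]/e_1K[Q]$, which has the explicit basis $\{X_q\mid 0\le q<1\}$ with truncated multiplication; since its maximal ideal is nil, this ring is already local (so localising would change nothing, and it is in fact isomorphic to the paper's $T$), and your computation of $(0:_R\im)=0$ becomes a one-line linear-independence argument with the choice $0<q<1-q_k$. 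This buys a noticeably more elementary and self-contained verification, at the cost of having to check associativity, locality and idempotence of $\im$ by hand (all of which you do, correctly). The only cosmetic point is the degenerate case $k=0$ in your annihilator argument, where $1-q_k$ is undefined; there $x$ is a unit and trivially does not annihilate $\im\neq 0$, so nothing is lost.
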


\begin{proof}
Let $K$ be a field, let $Q$ denote the additive monoid of positive rational numbers, let $R=K[Q]$ denote the algebra of $Q$ over $K$, and let $\{e_{\alpha}\mid\alpha\in Q\}$ denote its canonical basis. Then, $\im=\langle e_{\alpha}\mid\alpha>0\rangle_R$ is a maximal ideal. We consider $S=R_{\im}$ and $\inn=\im_{\im}$. Then, $S$ is a $1$-dimensional local valuation ring with idempotent maximal ideal $\inn$ (\cite[2.2]{qr}). Let $\ia=\langle\frac{e_1}{1}\rangle_S$, let $T=S/\ia$, and let $\ip=\inn/\ia$. Then, $T$ is a $0$-dimensional local ring with idempotent maximal ideal $\ip$.

We will show now that $\Gamma_{\ip}(T)=0$, and then \ref{b60} d) will yield the claim. Since $\ip$ is idempotent, we have to show that if $f\in T$ with $\ip f=0$, then $f=0$. By construction of $T$ and $S$ we have to show that if $f\in R$ with $\inn\frac{f}{1}\subseteq\ia$, then $\frac{f}{1}\in\ia$. So, let $f\in R$ with $\inn\frac{f}{1}\subseteq\ia$. We assume that $\frac{f}{1}\notin\ia$ and will derive a contradiction. There exist $l\in\N$, a family $(f_i)_{i=0}^l$ in $K\setminus 0$, and a strictly increasing family $(\beta_i)_{i=0}^l$ in $Q$ with $f=\sum_{i=0}^lf_ie_{\beta_i}$. If $\beta_0\geq 1$, then we get the contradiction $\frac{f}{1}\in\ia$. So, $\beta_0<1$, and hence there exists $\alpha\in Q$ with $\alpha>0$ and $\beta_0+\alpha<1$. As $\inn\frac{f}{1}\subseteq\ia$ it follows that $\frac{e_\alpha}{1}\frac{f}{1}\in\ia$, and thus there exist $g\in R\setminus 0$ and $t\in R\setminus\im$ with $\frac{fe_\alpha}{1}=\frac{ge_1}{t}\in S$. As $R$ is integral this implies \[tfe_\alpha=ge_1.\leqno{(*)}\] There exist $k\in\N$, a family $(t_i)_{i=0}^k$ in $K\setminus 0$, and a strictly increasing family $(\alpha_i)_{i=1}^k$ in $Q\setminus 0$ with $t=t_0+\sum_{i=1}^kt_ie_{\alpha_i}$. Furthermore, there exist $m\in\N$, a family $(g_i)_{i=0}^m$ in $K\setminus 0$, and a strictly increasing family $(\gamma_i)_{i=0}^m$ in $Q$ with $g=\sum_{i=0}^mg_ie_{\gamma_i}$. Thus, $(*)$ takes the form \[\sum_{j=0}^lt_0f_je_{\alpha+\beta_j}+\sum_{i=1}^k\sum_{j=0}^lt_if_je_{\alpha+\alpha_i+\beta_j}=\sum_{i=0}^mg_ie_{\gamma_i+1}.\leqno{(**)}\] Both sides of $(**)$ being nonzero, there exists the smallest $\delta\in Q$ such that $e_\delta$ occurs on both sides. As $\alpha_i>0$ for every $i\in[1,k]$, the left side yields $\delta=\alpha+\beta_0$, while the right side yields $\delta=\gamma_0+1$. So, we get the contradiction $1>\alpha+\beta_0=\gamma_0+1\geq 1$, and thus $\Gamma_{\ip}(T)=0$ as claimed.
\end{proof}

\begin{qus}\label{b90}
We saw in \ref{b30} that a weakly $\ia$-fair $R$-module is weakly $\ia$-quasifair. On the other hand, \ref{b70} and \ref{b80} show that a weakly $\ia$-quasifair $R$-module need not be $\ia$-fair or weakly $\ia$-fair, and that an $\ia$-fair $R$-module need not be weakly $\ia$-quasifair or weakly $\ia$-fair. Thus, in this general setting we are left with the following questions:
\begin{aufz}
\item[$(*)$] Is a weakly $\ia$-fair $R$-module necessarily $\ia$-fair?
\item[$(**)$] Is a weakly $\ia$-quasifair and $\ia$-fair $R$-module necessarily weakly $\ia$-fair?
\end{aufz}
By combinatorial observations one can show that if $R$ is a $0$-dimensional local ring or a $1$-dimensional local ring with a single minimal prime ideal that is of finite type, then every weakly $\ia$-fair $R$-module is $\ia$-fair. But we were not able to answer the above questions in general.
\end{qus}


\section{Results in the radical case}

\noindent\textit{Throughout this section, let $R$ be a ring, and let $\ia\subseteq R$ be an ideal.}\smallskip

\begin{prop}\label{d60}
If $\Gamma_{\ia}$ is a radical and $M$ is an $R$-module, then \[\ass_R(M/\Gamma_{\ia}(M))\cap\var(\ia)=\emptyset.\]
\end{prop}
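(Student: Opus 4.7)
The plan is to argue by contradiction, in a very short and direct manner. Suppose $\ip \in \ass_R(M/\Gamma_\ia(M)) \cap \var(\ia)$. Then by definition of associated prime there exists $x \in M$ such that, writing $\overline{x}$ for the canonical image of $x$ in $M/\Gamma_\ia(M)$, we have $\ip = (0:_R \overline{x})$. Since $\ip$ is a prime ideal this in particular forces $\overline{x} \neq 0$.

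Next I would exploit the hypothesis $\ia \subseteq \ip$ to show that $\overline{x}$ is killed by $\ia$. Indeed, $\ia \subseteq \ip = (0:_R \overline{x})$ gives $\ia \overline{x} = 0$, so that $\overline{x} \in (0:_{M/\Gamma_\ia(M)} \ia) \subseteq \Gamma_\ia(M/\Gamma_\ia(M))$ by the definition of the torsion functor recalled in \ref{tor}~A).

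Now I would invoke the radical hypothesis: since $\Gamma_\ia$ is a radical, we have $\Gamma_\ia(M/\Gamma_\ia(M)) = 0$ by \ref{tor}~B), so $\overline{x} = 0$. This contradicts $\overline{x} \neq 0$, and completes the proof. There is no real obstacle here; the whole content of the statement is the basic observation that an element of $M/\Gamma_\ia(M)$ whose annihilator contains $\ia$ must already be $\ia$-torsion, which is exactly what the radical property forbids (other than for $\overline{x}=0$, whose annihilator is the non-prime ideal $R$).
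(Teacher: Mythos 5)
Your proof is correct and is essentially the paper's argument: the paper simply applies \ref{b10}~a) to the module $M/\Gamma_\ia(M)$ (whose $\ia$-torsion vanishes by the radical hypothesis), and your direct computation that $\ia\subseteq\ip=(0:_R\overline{x})$ forces $\overline{x}\in\Gamma_\ia(M/\Gamma_\ia(M))=0$ is exactly the relevant inclusion of \ref{b10}~a) spelled out inline.
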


\begin{proof}
Applying \ref{b10} a) to the $R$-module $M/\Gamma_\ia(M)$ yields \[\emptyset=\ass_R(0)=\ass_R(\Gamma_\ia(M/\Gamma_\ia(M)))=\ass_R(M/\Gamma_\ia(M))\cap\var(\ia),\] and thus our claim.
\end{proof}

\begin{prop}\label{d10}
Let $M$ be an $R$-module. Then:

a) $\assf_R(M)\cap\var(\ia)=\emptyset\Rightarrow\Gamma_\ia(M)=0\Rightarrow\ass_R(M)\cap\var(\ia)=\emptyset$.

b) $\Gamma_\ia(M)=M\Rightarrow\assf_R(M)\subseteq\var(\ia)\Rightarrow\ass_R(M)\subseteq\var(\ia)$.
\end{prop}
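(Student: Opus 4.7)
The proposition consists of four one-step implications, each of which reduces directly to one of the basic relations from \ref{b10} together with the elementary facts collected in \ref{a20}. The plan is therefore to dispatch them in turn, without needing the assumption that $\Gamma_\ia$ is a radical (which, despite the section title, is not part of the hypotheses).

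For a), to establish the first implication I would argue contrapositively: if $\Gamma_\ia(M)\neq 0$, then by \ref{a20} C) there exists some $\ip\in\assf_R(\Gamma_\ia(M))$, and by \ref{b10} b) such $\ip$ lies in $\assf_R(M)\cap\var(\ia)$, so the latter is nonempty. The second implication is immediate from \ref{b10} a): if $\Gamma_\ia(M)=0$, then $\ass_R(M)\cap\var(\ia)=\ass_R(\Gamma_\ia(M))=\ass_R(0)=\emptyset$.

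For b), the first implication is read off directly from \ref{b10} b): if $M=\Gamma_\ia(M)$, then $\assf_R(M)=\assf_R(\Gamma_\ia(M))\subseteq\assf_R(M)\cap\var(\ia)\subseteq\var(\ia)$. The second implication is then trivial because $\ass_R(M)\subseteq\assf_R(M)$ by \ref{a20} A).

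There is no real obstacle here; the proposition is a packaging of the already-established inclusions from \ref{b10} together with the vanishing criterion of \ref{a20} C) and the containment $\ass_R\subseteq\assf_R$. The only subtle point worth flagging is that the converses to the implications in a) and b) need not hold in general (as witnessed by the counterexamples of Section 3), which is implicitly why the statement is phrased as a chain of one-way arrows rather than equivalences.
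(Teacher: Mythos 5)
Your proposal is correct and takes essentially the same route as the paper: the second implications are handled identically (via \ref{b10} a) and the inclusion $\ass_R(M)\subseteq\assf_R(M)$ from \ref{a20} A)), and for the first implications you invoke \ref{a20} C) together with the already-established containment $\assf_R(\Gamma_\ia(M))\subseteq\assf_R(M)\cap\var(\ia)$ from \ref{b10} b), where the paper instead re-derives that containment element-wise through a minimal prime over $(0:_Rx)$ and the torsion module $R/\ip$ --- the mathematical content is the same. Your observation that no radical hypothesis on $\Gamma_\ia$ is needed is also accurate.
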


\begin{proof}
a) Suppose $\assf_R(M)\cap\var(\ia)=\emptyset$, and assume there exists $x\in\Gamma_\ia(M)\setminus 0$. Then, there exists $\ip\in\min(0:_Rx)$, and so we have $\ip\in\assf_R(M)$. Moreover, we have the situation \[\Gamma_\ia(M)\supseteq\langle x\rangle_R\overset{\cong}\longleftarrow R/(0:_Rx)\twoheadrightarrow R/\ip,\] implying $\Gamma_\ia(R/\ip)=R/\ip$, and therefore the contradiction $\ia\subseteq\ip$. This proves the first implication. The second one follows from \ref{b10} a).

b) (cf. \cite[1.14]{qr}) If $\Gamma_\ia(M)=M$ and $\ip\in\assf_R(M)$, then there exists $x\in M$ such that $\ip\in\min(0:_Rx)$, and there exists $n\in\N$ with $\ia^nx=0$. It follows that $\ia^n\subseteq(0:_Rx)\subseteq\ip$, hence $\ia\subseteq\ip$. This proves the first implication. The second one holds by \ref{a20} A).
\end{proof}

\begin{no}\label{d20}
A) The ideal $\ia$ is called \textit{centered} if the first implication in \ref{d10} a) is an equivalence for every $R$-module $M$, i.e., if \[\Gamma_\ia(M)=0\Leftrightarrow\assf_R(M)\cap\var(\ia)=\emptyset\] for every $R$-module $M$; it is called \textit{half-centered} if the first implication in \ref{d10} b) is an equivalence for every $R$-module $M$, i.e., if \[\Gamma_\ia(M)=M\Leftrightarrow\assf_R(M)\subseteq\var(\ia)\] for every $R$-module $M$. Finally, $\ia$ is called \textit{well-centered} if it is centered and half-centered.\smallskip

B) Suppose $\Gamma_\ia$ is a radical (\ref{tor} B)). Denoting by $T_\ia$ and $F_\ia$ the classes of $\ia$-torsion and $\ia$-torsionfree $R$-modules, resp., we get a torsion theory $(T_\ia,F_\ia)$. It is readily checked now that $(T_\ia,F_\ia)$ is half-centered or well-centered, resp., in the sense of \cite{cahen1} if and only if the ideal $\ia$ is so.\smallskip

C) (cf. \cite[1.14; 1.21 B)]{qr}) An arbitrary ideal need not be half-centered, but ideals of finite type are so. Indeed, if $R$ is a $0$-dimensional local ring whose maximal ideal $\im$ is not nilpotent (e.g., the ring in \ref{b70}), then $R$ is not an $\im$-torsion module, but $\assf_R(R)\subseteq\spec(R)=\var(\im)$, and therefore $\im$ is not half-centered. On the other hand, if $\ia$ is of finite type and $\assf_R(M)\subseteq\var(\ia)$, then for $x\in M$ we have $\ia\subseteq\sqrt{(0:_Rx)}$, hence there exists $n\in\N$ with $\ia^nx=0$, and therefore $\ia$ is half-centered.
\end{no}

\begin{prop}\label{d35}
If $\Gamma_\ia$ is a radical, then the following statements are equivalent: (i) $\ia$ is centered; (ii) $\ia$ is well-centered; (iii) Every $R$-module is weakly $\ia$-quasifair.
\end{prop}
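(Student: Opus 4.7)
The plan is to run the cycle (ii) $\Rightarrow$ (i) $\Rightarrow$ (iii) $\Rightarrow$ (ii). The first implication is immediate from the definition in \ref{d20} A), since well-centered means centered and half-centered.

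For (i) $\Rightarrow$ (iii), let $M$ be an $R$-module and set $N = M/\Gamma_\ia(M)$. Because $\Gamma_\ia$ is a radical, $\Gamma_\ia(N) = 0$, so centeredness gives $\assf_R(N) \cap \var(\ia) = \emptyset$. Applying \ref{a20} D) to $0 \to \Gamma_\ia(M) \to M \to N \to 0$ and intersecting with $\var(\ia)$ yields $\assf_R(M) \cap \var(\ia) \subseteq \assf_R(\Gamma_\ia(M))$; combined with \ref{b10} b), this gives the equality characterizing weak $\ia$-quasifairness.

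For (iii) $\Rightarrow$ (ii), centeredness is easy: the implication $\assf_R(M) \cap \var(\ia) = \emptyset \Rightarrow \Gamma_\ia(M) = 0$ is \ref{d10} a), and its converse follows from (iii) together with \ref{a20} C), since $\Gamma_\ia(M) = 0$ forces $\assf_R(M) \cap \var(\ia) = \assf_R(\Gamma_\ia(M)) = \emptyset$. The substantial step is half-centeredness: assuming $\assf_R(M) \subseteq \var(\ia)$, I would reduce $\Gamma_\ia(M) = M$ via \ref{tor} E) to showing $\hm{R}{M}{N} = 0$ for every $\ia$-torsionfree $R$-module $N$. Suppose $f \colon M \to N$ is nonzero; pick $x \in M$ with $y := f(x) \neq 0$, and choose $\ip \in \min(0 :_R y)$. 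Then $\ip \in \assf_R(N)$, and applying (iii) to the $\ia$-torsionfree module $N$ gives $\assf_R(N) \cap \var(\ia) = \emptyset$, so $\ia \not\subseteq \ip$. On the other hand, $(0 :_R x) \subseteq (0 :_R y) \subseteq \ip$, so $\ip$ contains some $\iq \in \min(0 :_R x) \subseteq \assf_R(M) \subseteq \var(\ia)$; then $\ia \subseteq \iq \subseteq \ip$, a contradiction.

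The main obstacle is the half-centeredness step. Since weak assassins do not pass cleanly between $M$ and its quotient $M/\Gamma_\ia(M)$, one cannot directly argue that $\assf_R(M/\Gamma_\ia(M)) = \emptyset$; the detour through the torsion-theoretic criterion \ref{tor} E), together with the minimal-prime lifting $(0 :_R x) \subseteq (0 :_R f(x))$, is what converts the hypothesis on $\assf_R(M)$ into the vanishing needed.
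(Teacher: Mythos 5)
Your proof is correct, and its engine is the same as the paper's: the only nontrivial point is half-centeredness, and like the paper you obtain it by reducing, via \ref{tor} E), to showing $\hm{R}{M}{P}=0$ for every $\ia$-torsionfree $P$. The differences are organisational and local. You run the cycle (ii)$\Rightarrow$(i)$\Rightarrow$(iii)$\Rightarrow$(ii), so the Hom-vanishing argument sits inside (iii)$\Rightarrow$(ii), whereas the paper proves (i)$\Rightarrow$(ii)$\Rightarrow$(iii)$\Rightarrow$(i) and does that work in (i)$\Rightarrow$(ii); since (i) is an immediate consequence of (iii), this is only a reshuffling. Inside the Hom-vanishing step, the paper applies the centeredness hypothesis to the image module $M/\ke(f)$, using the support description \ref{a20} G) to see $\assf_R(M/\ke(f))\subseteq\var(\ia)$ and then \ref{a20} C) to conclude $M/\ke(f)=0$; you instead argue element-wise: from $(0:_Rx)\subseteq(0:_Rf(x))\subseteq\ip$ you pick a minimal prime $\iq$ of $(0:_Rx)$ contained in $\ip$ (the standard fact that a prime containing an ideal contains a minimal prime of that ideal) and contradict $\ia\not\subseteq\ip$. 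Both mechanisms are sound; yours avoids \ref{a20} G) at the cost of invoking the minimal-prime lifting, while the paper's version kills the whole image module at once. Your direct proof of (i)$\Rightarrow$(iii) is exactly the paper's (ii)$\Rightarrow$(iii) argument, since that step too only uses centeredness plus the radical hypothesis.
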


\begin{proof}
``(i)$\Rightarrow$(ii)'': Suppose $\ia$ is centered. Let $M$ be an $R$-module with $\assf_R(M)\subseteq\var(\ia)$. Let $P$ be an $\ia$-torsionfree $R$-module, and let $f\in\hm{R}{M}{P}$. Then, \[\assf_R(M/\ke(f))\subseteq\supp_R(M/\ke(f))\subseteq\supp_R(M)\subseteq\var(\ia)\] by \ref{a20} G). Moreover, $f$ induces a monomorphism of $R$-modules $M/\ke(f)\rightarrowtail P$. So, $M/\ke(f)$ is $\ia$-torsionfree, and thus \[\assf_R(M/\ke(f))=\assf_R(M/\ke(f))\cap\var(\ia)=\emptyset\] by our hypothesis. It follows that $M/\ke(f)=0$ by \ref{a20} C), and therefore $f=0$. This shows that whenever $P$ is an $\ia$-torsionfree $R$-module, then $\hm{R}{M}{P}=0$. Since $\Gamma_\ia$ is a radical, this implies $M=\Gamma_\ia(M)$ by \ref{tor} E). So, $\ia$ is half-centered and therefore well-centered.

``(ii)$\Rightarrow$(iii)'': Suppose $\ia$ is well-centered. By \ref{a20} D) we have \[\assf_R(M)\cap\var(\ia)\subseteq\assf_R(\Gamma_\ia(M))\cup(\assf_R(M/\Gamma_\ia(M))\cap\var(\ia)).\] Our hypotheses imply $\assf_R(M/\Gamma_\ia(M))\cap\var(\ia)=\emptyset$, hence $\assf_R(M)\cap\var(\ia)\subseteq\assf_R(\Gamma_\ia(M))$, and then \ref{b10} b) yields the claim.

``(iii)$\Rightarrow$(i)'': Suppose every $R$-module is weakly $\ia$-quasifair. Let $M$ be an $\ia$-torsionfree $R$-module. Then, $\assf_R(M)\cap\var(\ia)=\assf_R(\Gamma_\ia(M))=\assf_R(0)=\emptyset$, and thus $\ia$ is centered.
\end{proof}

\begin{no}\label{d40}
Suppose that $\ass_R(M)=\assf_R(M)$ for every $R$-module $M$. Then, $\ia$ is centered. If in addition $\Gamma_\ia$ is a radical, and in particular if $\ia$ is of finite type, then $\ia$ is well-centered by \ref{d35}. It follows thus from \ref{a20} A) that ideals in noetherian rings are well-centered.
\end{no}


\section{Results for ideals of finite type}

\noindent\textit{Throughout this section, let $R$ be a ring, and let $\ia\subseteq R$ be an ideal.}\smallskip

\begin{prop}\label{c10}
If $\ia$ is principal and idempotent, then the $R$-module $R$ is $\ia$-fair.
\end{prop}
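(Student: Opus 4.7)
My plan is to exploit principality together with idempotence to obtain a very explicit description of the torsion submodule. Writing $\ia=\langle a\rangle_R$, idempotence gives $\ia^2=\ia$ and inductively $\ia^n=\ia$ for all $n\geq 1$; hence the union defining $\Gamma_\ia(R)=\bigcup_{n\in\N}(0:_R\ia^n)$ collapses to the single ideal $(0:_Ra)$. With this crucial identity in hand, the rest of the argument becomes elementary ideal arithmetic.

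By \ref{b10} c) the inclusion ``$\supseteq$'' in the definition of $\ia$-fairness is automatic, so it suffices to verify $\ass_R(R/\Gamma_\ia(R))\subseteq\ass_R(R)\setminus\var(\ia)$. The ``$\setminus\var(\ia)$'' part is essentially free: since $\ia$ is principal, hence of finite type, $\Gamma_\ia$ is a radical by \ref{tor} B), and then \ref{d60} (equivalently, \ref{b10} a) applied to $R/\Gamma_\ia(R)$) yields $\ass_R(R/\Gamma_\ia(R))\cap\var(\ia)=\emptyset$.

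For the remaining inclusion into $\ass_R(R)$, I take $\ip\in\ass_R(R/\Gamma_\ia(R))$ and pick $x\in R$ with $\ip=(0:_R\overline{x})$, where $\overline{x}$ denotes the image of $x$ in $R/\Gamma_\ia(R)$. Using the identity $\Gamma_\ia(R)=(0:_Ra)$, the condition $rx\in\Gamma_\ia(R)$ translates immediately to $arx=0$, so
\[
\ip\;=\;\{\,r\in R\mid rx\in\Gamma_\ia(R)\,\}\;=\;(0:_Rax).
\]
Since $\overline{x}\neq 0$, we have $x\notin(0:_Ra)$, i.e.\ $ax\neq 0$; thus $\ip$ genuinely arises as the annihilator of a nonzero element of $R$, and therefore $\ip\in\ass_R(R)$, as required.

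The only real difficulty I anticipate is the one already neutralised by idempotence, namely collapsing the colimit in the definition of $\Gamma_\ia$ to the single annihilator $(0:_Ra)$; without that simplification the translation $rx\in\Gamma_\ia(R)\Leftrightarrow arx=0$ would break, and one would have no obvious way to lift the annihilator relation from the quotient back to $R$.
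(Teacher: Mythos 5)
Your proof is correct and is essentially the paper's argument: both rest on the observation that idempotence collapses $\Gamma_\ia(R)$ to $(0:_Ra)$, combined with \ref{b10} c), \ref{tor} B) and \ref{d60} for the disjointness from $\var(\ia)$. The only cosmetic difference is that the paper packages your elementwise computation $(0:_R\overline{x})=(0:_Rax)$ as the module isomorphism $R/\Gamma_\ia(R)\cong\langle a\rangle_R\subseteq R$ and then cites \ref{a20} D).
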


\begin{proof}
There exists $x\in R$ with $\ia=\langle x\rangle_R$, and $\Gamma_\ia(R)=(0:_Rx)$. We get \[R/\Gamma_\ia(R)=R/(0:_Rx)\cong\langle x\rangle_R=\ia\subseteq R,\] hence $\ass_R(R/\Gamma_\ia(R))\subseteq\ass_R(R)$ by \ref{a20} D), and so the claim follows from \ref{b10} c), \ref{tor} B) and \ref{d60}.
\end{proof}

\begin{cor}\label{c11}
If $R$ is absolutely flat and $\ia$ is of finite type, then the $R$-module $R$ is $\ia$-fair.
\end{cor}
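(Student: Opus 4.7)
The plan is to reduce the statement to Proposition \ref{c10} by showing that under the hypotheses, $\ia$ is automatically principal and idempotent. Recall that in an absolutely flat ring (i.e., a von Neumann regular ring), every principal ideal is generated by an idempotent: for $x\in R$ one can find $y\in R$ with $x=x^2y$, and then $e=xy$ is idempotent with $\langle x\rangle_R=\langle e\rangle_R$.

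From this, the standard argument is that every finitely generated ideal in an absolutely flat ring is itself principal (and generated by an idempotent). Indeed, if $\ia=\langle e_1,\dots,e_n\rangle_R$ with each $e_i$ idempotent, I would proceed by induction on $n$: for $n=2$, the element $f=e_1+e_2-e_1e_2$ is idempotent and satisfies $\langle e_1,e_2\rangle_R=\langle f\rangle_R$ (since $e_1f=e_1$ and $e_2f=e_2$, while $f\in\langle e_1,e_2\rangle_R$); the general case then follows by iterating.

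Thus, writing $\ia$ as generated by finitely many elements and replacing each generator by an associated idempotent, the inductive step above shows $\ia=\langle e\rangle_R$ for some idempotent $e\in R$. In particular $\ia$ is principal and $\ia^2=\ia$, so Proposition \ref{c10} applies and yields that the $R$-module $R$ is $\ia$-fair. The only substantive step is the reduction to a single idempotent generator; everything else is a direct invocation of \ref{c10}, and no new obstacle arises.
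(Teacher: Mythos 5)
Your proposal is correct and follows exactly the paper's route: the paper likewise deduces the corollary immediately from \ref{c10}, citing the standard fact that finitely generated ideals of absolutely flat rings are principal and idempotent, which you verify in detail via the idempotent generators $e=xy$ and $f=e_1+e_2-e_1e_2$. No gap; you have simply supplied the proof of the auxiliary fact that the paper leaves as known.
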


\begin{proof}
Immediately from \ref{c10}, since ideals of finite type in absolutely flat rings are principal and idempotent.
\end{proof}

\begin{prop}\label{c20}
If $\ia$ is of finite type and $M$ is an $R$-module, then \[\assf_R(M/\Gamma_\ia(M))\cap\var(\ia)=\emptyset.\]
\end{prop}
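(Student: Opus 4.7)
The plan is a proof by contradiction. Suppose there exists $\ip\in\assf_R(M/\Gamma_\ia(M))\cap\var(\ia)$, and pick $\overline{x}\in M/\Gamma_\ia(M)$ with $\ip\in\min(0:_R\overline{x})$. Writing $\ia=\langle a_1,\ldots,a_n\rangle_R$ (possible since $\ia$ is of finite type), I aim to produce $s\in R\setminus\ip$ and $N\in\N$ such that $s\ia^N\overline{x}=0$. Once this is in hand, \ref{tor} B) applies (since $\ia$ is of finite type) so that $\Gamma_\ia$ is a radical, hence $\Gamma_\ia(M/\Gamma_\ia(M))=0$; then $\ia^N(s\overline{x})=0$ forces $s\overline{x}=0$, giving $s\in(0:_R\overline{x})\subseteq\ip$ and contradicting $s\notin\ip$.

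To produce such $s$ and $N$, I localize at $\ip$. A direct check gives $(0:_R\overline{x})R_\ip=(0:_{R_\ip}\overline{x}/1)$, and the minimality of $\ip$ in $\min(0:_R\overline{x})$ prevents any prime of $R$ strictly contained in $\ip$ from containing $(0:_R\overline{x})$. Consequently $\ip R_\ip$ is the unique prime of $R_\ip$ above $(0:_{R_\ip}\overline{x}/1)$ and hence coincides with $\sqrt{(0:_{R_\ip}\overline{x}/1)}$. Since $a_i/1\in\ip R_\ip$ for every $i$, there exist $k_i\in\N$ and $s_i\in R\setminus\ip$ with $s_ia_i^{k_i}\overline{x}=0$. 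Setting $s=s_1\cdots s_n\in R\setminus\ip$ and $N=k_1+\cdots+k_n$, a short multinomial estimate yields $\ia^N\subseteq\langle a_1^{k_1},\ldots,a_n^{k_n}\rangle_R$, whence $s\ia^N\overline{x}=0$, as required.

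The main obstacle is the localization step, namely verifying cleanly that the minimality condition $\ip\in\min(0:_R\overline{x})$ upgrades to the identity $\ip R_\ip=\sqrt{(0:_{R_\ip}\overline{x}/1)}$; everything else is bookkeeping. Finite generation of $\ia$ is invoked twice: once to reduce the radical membership $\ia R_\ip\subseteq\sqrt{(0:_{R_\ip}\overline{x}/1)}$ to finitely many conditions so that a uniform $s$ and a uniform $N$ suffice, and once via \ref{tor} B) to ensure that $\Gamma_\ia(M/\Gamma_\ia(M))$ actually vanishes. Both uses would fail for an arbitrary ideal, which is consistent with the counterexample in \ref{b70}.
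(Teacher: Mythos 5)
Your proof is correct, and it shares the paper's skeleton (localize at $\ip$ to exploit minimality, use finite generation of $\ia$ to turn radical membership into a power, and finish with the fact that $\Gamma_\ia$ is a radical when $\ia$ is of finite type) while differing in execution in a way worth noting. The paper transports the whole problem to $R_\ip$: it invokes \ref{tor} D) (the canonical map $S^{-1}\Gamma_\ia(M)\rightarrow\Gamma_{S^{-1}\ia}(S^{-1}M)$ is an isomorphism for $\ia$ of finite type) and \ref{a20} F) (behaviour of weak assassins under localization) to get $\ip_\ip\in\assf_{R_\ip}(M_\ip/\Gamma_{\ia_\ip}(M_\ip))$, and then derives the contradiction inside the localized module using the radical property of $\Gamma_{\ia_\ip}$. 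You instead stay with the element $\overline{x}\in M/\Gamma_\ia(M)$ over $R$: localization is used only at the level of the cyclic module $\langle\overline{x}\rangle_R$, where the identity $(0:_R\overline{x})R_\ip=(0:_{R_\ip}\overline{x}/1)$ is automatic, and you clear denominators with a single multiplier $s\notin\ip$ before concluding with $\Gamma_\ia(M/\Gamma_\ia(M))=0$ over $R$ itself. This buys you independence from \ref{tor} D) and \ref{a20} F) (neither of which is needed in your argument), at the cost of the explicit bookkeeping with the $s_i$, $k_i$ and the multinomial estimate $\ia^{N}\subseteq\langle a_1^{k_1},\ldots,a_n^{k_n}\rangle_R$; all of these steps check out, including the upgrade of $\ip\in\min(0:_R\overline{x})$ to $\ip R_\ip=\sqrt{(0:_{R_\ip}\overline{x}/1)}$ via the correspondence of primes contained in $\ip$.
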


\begin{proof}
We assume there exists $\ip\in\assf_R(M/\Gamma_\ia(M))\cap\var(\ia)$. By exactness of localisation we have a canonical isomorphism of $R_\ip$-modules $(M/\Gamma_\ia(M))_\ip\cong M_\ip/\Gamma_\ia(M)_\ip$. Since $\ia$ is of finite type, we have a canonical isomorphism of $R_\ip$-modules $M_\ip/\Gamma_\ia(M)_\ip\cong M_\ip/\Gamma_{\ia_\ip}(M_\ip)$ by \ref{tor} D), hence $\ip_\ip\in\assf_{R_\ip}((M/\Gamma_\ia(M))_\ip)=\assf_{R_\ip}(M_\ip/\Gamma_{\ia_\ip}(M_\ip))$ by \ref{a20} F). So, there exists $x\in(M_\ip/\Gamma_{\ia_\ip}(M_\ip))\setminus 0$ with $\ip_\ip\in\min(0:_{R_\ip}x)$. Since $\ip_\ip$ is the unique maximal ideal of $R_\ip$ it follows that $\ip_\ip=\sqrt{(0:_{R_\ip}x)}$. Since $\ip\in\var(\ia)$ we have $\ia_\ip\subseteq\ip_\ip=\sqrt{(0:_{R_\ip}x)}$. So, as $\ia$ is of finite type, the same holds for $\ia_\ip$, and thus there exists $n\in\N$ with $(\ia_\ip)^n\subseteq(0:_{R_\ip}x)$. This implies $x\in\Gamma_{\ia_\ip}(M_\ip/\Gamma_{\ia_\ip}(M_\ip))$. But as $\ia_\ip$ is of finite type we know that $\Gamma_{\ia_\ip}$ is a radical (\ref{tor} B)), and thus we get the contradiction $x=0$. Herewith the claim is proven.
\end{proof}

\begin{prop}\label{f20}
If $\ia$ is of finite type, then every $R$-module is weakly $\ia$-quasi\-fair.
\end{prop}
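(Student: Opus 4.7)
The plan is to deduce this from the framework of Section 4. Since $\ia$ is of finite type, $\Gamma_\ia$ is a radical by \ref{tor} B), so the equivalences of \ref{d35} are available; it therefore suffices to show that $\ia$ is centered. In view of the first implication of \ref{d10} a), which always holds, this reduces to proving its converse: that whenever $\assf_R(M) \cap \var(\ia) \neq \emptyset$, we have $\Gamma_\ia(M) \neq 0$.

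To establish this, I would localize at a prime $\ip \in \assf_R(M) \cap \var(\ia)$, imitating the strategy used in the proof of \ref{c20}. By \ref{a20} F), $\ip_\ip \in \assf_{R_\ip}(M_\ip)$, so one can pick some $y \in M_\ip \setminus 0$ with $\ip_\ip \in \min(0 :_{R_\ip} y)$. Since $R_\ip$ is local with maximal ideal $\ip_\ip$, any prime of $R_\ip$ is contained in $\ip_\ip$, so minimality of $\ip_\ip$ over $(0 :_{R_\ip} y)$ forces $\ip_\ip$ to be the \emph{unique} minimal prime of that ideal; equivalently, $\sqrt{(0 :_{R_\ip} y)} = \ip_\ip$.

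From $\ia \subseteq \ip$ we obtain $\ia_\ip \subseteq \ip_\ip = \sqrt{(0 :_{R_\ip} y)}$. Since $\ia$ (and hence $\ia_\ip$) is of finite type, this radical containment upgrades to $\ia_\ip^n y = 0$ for some $n \in \N$, so $y \in \Gamma_{\ia_\ip}(M_\ip)$. By \ref{tor} D), $\Gamma_{\ia_\ip}(M_\ip) \cong \Gamma_\ia(M)_\ip$, and therefore $\Gamma_\ia(M)_\ip \neq 0$, which in particular gives $\Gamma_\ia(M) \neq 0$. This proves that $\ia$ is centered, and \ref{d35} then yields the claim.

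The only real obstacle is the localization step in the middle paragraph, where one must convert the abstract datum that $\ip_\ip$ is weakly associated to $M_\ip$ into the concrete statement that $\sqrt{(0 :_{R_\ip} y)} = \ip_\ip$; everything else (finite generation of $\ia_\ip$, compatibility of $\Gamma_\ia$ with localization, and reduction to centeredness) is standard and already encapsulated in \ref{tor} D), \ref{a20} F), and \ref{d35}.
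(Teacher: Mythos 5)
Your argument is correct and follows essentially the paper's route: both reduce the claim, via \ref{tor} B) and the equivalence (i)$\Leftrightarrow$(iii) of \ref{d35}, to showing that $\ia$ is centered. The only difference is cosmetic: the paper obtains centeredness by citing \ref{c20}, whereas you re-derive it directly by the very same localization argument (using \ref{a20} F), the identity $\sqrt{(0:_{R_\ip}y)}=\ip_\ip$, finite generation of $\ia_\ip$, and \ref{tor} D)) that underlies the proof of \ref{c20}.
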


\begin{proof}
Let $M$ be an $R$-module with $\Gamma_\ia(M)=0$. Since $\ia$ is of finite type, \ref{c20} implies \[\assf_R(M/\Gamma_\ia(M))\cap\var(\ia)=\emptyset,\] and thus, by \ref{a20} D), it follows that \[\assf_R(M)\cap\var(\ia)\subseteq\assf_R(\Gamma_\ia(M))\cup(\assf_R(M/\Gamma_\ia(M))\cap\var(\ia))=\assf_R(\Gamma_\ia(M)).\] Therefore, $\ia$ is centered, and so \ref{tor} B) and \ref{d35} yield the claim.
\end{proof}

\begin{prop}\label{f40}
If $\ia$ is principal, then every $R$-module is weakly $\ia$-fair.
\end{prop}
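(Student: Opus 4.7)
My plan is to establish the non-trivial inclusion $\assf_R(M/\Gamma_\ia(M))\subseteq\assf_R(M)\setminus\var(\ia)$; the reverse inclusion is \ref{b10} d). Write $\ia=\langle a\rangle_R$, and fix an $R$-module $M$ together with $\ip\in\assf_R(M/\Gamma_\ia(M))$. Since $\ia$ is principal and hence of finite type, \ref{c20} immediately gives $\ip\notin\var(\ia)$, so $a\notin\ip$. It then suffices to verify that $\ip\in\assf_R(M)$.

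By hypothesis there exists $x\in M$ with $\ip\in\min(0:_R\overline{x})$, where $\overline{x}$ denotes the image of $x$ in $M/\Gamma_\ia(M)$. I claim in fact that $\ip\in\min(0:_R x)$. Since $(0:_R x)\subseteq(0:_R\overline{x})\subseteq\ip$, the task reduces to ruling out the existence of a prime $\iq$ with $(0:_R x)\subseteq\iq\subsetneq\ip$. Given any such $\iq$, the minimality of $\ip$ over $(0:_R\overline{x})$ furnishes some $r\in(0:_R\overline{x})\setminus\iq$. Then $rx\in\Gamma_\ia(M)$, so there is $n\in\N$ with $a^nrx=0$, whence $a^nr\in(0:_R x)\subseteq\iq$. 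As $r\notin\iq$ and $\iq$ is prime, $a\in\iq\subseteq\ip$, contradicting $a\notin\ip$. Therefore $\ip\in\min(0:_R x)$, so $\ip\in\assf_R(M)$, which is what was wanted.

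The principal hypothesis enters in exactly one place: extracting $a\in\iq$ from $a^nr\in\iq$ by primality depends on $\ia$ being generated by a single element, for only then does removing the single generator amount to removing the entire ideal. Everything else is bookkeeping with colon ideals and the definition of $\Gamma_\ia$, so I foresee no real obstacle; the finite-type hypothesis needed to invoke \ref{c20} is automatic from principality.
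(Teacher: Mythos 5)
Your proof is correct, and it obtains the key inclusion $\assf_R(M/\Gamma_\ia(M))\subseteq\assf_R(M)$ by a different route than the paper. The paper identifies $\Gamma_{\langle a\rangle_R}(M)$ with the kernel of the canonical morphism $M\rightarrow M_a$ and appeals to the behaviour of weak assassins under localisation (\ref{a20} F)) to see that every weakly associated prime of $M/\Gamma_\ia(M)$ lies in $\assf_R(M)$ and avoids $a$, finishing with \ref{c20}; you instead fix a preimage $x$ of $\overline{x}$ and show directly that $\ip$ stays minimal over $(0:_Rx)$, excluding an intermediate prime $\iq$ by choosing $r\in(0:_R\overline{x})\setminus\iq$, getting $a^nr\in(0:_Rx)\subseteq\iq$, and using primality of $\iq$ together with $a\notin\ip$ (which you, like the paper, extract from \ref{c20}). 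So the skeleton is the same (\ref{c20} for disjointness from $\var(\ia)$, \ref{b10} d) for the reverse inclusion), but your treatment of the key step replaces the localisation formalism by an elementary colon-ideal computation, which makes the argument self-contained. One remark on your closing comment: principality is not actually what makes the primality step work. If one only knows $\ia\not\subseteq\ip$ (hence $\ia\not\subseteq\iq$, as $\iq\subseteq\ip$), then from $\ia^nr\subseteq(0:_Rx)\subseteq\iq$ one may pick $b\in\ia^n\setminus\iq$ and conclude $r\in\iq$, the same contradiction, for an arbitrary ideal $\ia$; your argument therefore gives $\assf_R(M/\Gamma_\ia(M))\setminus\var(\ia)\subseteq\assf_R(M)$ in general, and combined with \ref{c20} it would yield weak $\ia$-fairness for every ideal of finite type, which bears directly on question $(**)$ at the end of the paper and deserves careful checking rather than being attributed to principality.
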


\begin{proof}
Let $\ia=\langle a\rangle_R$ with $a\in R$, and let $M$ be an $R$-module. As $\Gamma_{\langle a\rangle_R}(M)$ is the kernel of the canonical morphism of $R$-modules $\eta_a\colon M\rightarrow M_a$, we have \[\assf_R(M/\Gamma_{\langle a\rangle_R}(M))=\{\ip\in\assf_R(M)\mid a\notin\ip\}\subseteq\assf_R(M)\] by \ref{a20} F), and thus \ref{c20} implies the claim.
\end{proof}

\begin{cor}
If $R$ is a Bezout ring and $\ia\subseteq R$ is an ideal of finite type, then every $R$-module is weakly $\ia$-fair.
\end{cor}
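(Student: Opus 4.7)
The plan is extremely short: this corollary is just a direct application of Proposition \ref{f40}. By definition, a Bezout ring is one in which every finitely generated ideal is principal. So if $\ia\subseteq R$ is an ideal of finite type in a Bezout ring $R$, then $\ia$ is principal.

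Once principality of $\ia$ is established, Proposition \ref{f40} applies verbatim and yields that every $R$-module is weakly $\ia$-fair. There is no obstacle; the whole content of the corollary is the definitional observation that ``Bezout $+$ finite type $\Rightarrow$ principal,'' together with an invocation of the preceding result. I would write the proof as a single sentence citing the definition of Bezout ring and \ref{f40}.
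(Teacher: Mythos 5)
Your proof is correct and coincides with the paper's own argument: the paper likewise notes that ideals of finite type in a Bezout ring are principal and then invokes Proposition \ref{f40}. Nothing further is needed.
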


\begin{proof}
Immediately from \ref{f40}, since ideals of finite type of Bezout rings are principal.
\end{proof}

\begin{lemma}\label{f45}
Let $\ib\subseteq R$ be a weakly proregular ideal with $\Gamma_\ib\circ\Gamma_\ia=\Gamma_\ia$, and let $M$ be an $R$-module. Then, there is an isomorphism of $R$-modules \[(M/\Gamma_\ia(M))/\Gamma_\ib(M/\Gamma_\ia(M))\cong M/\Gamma_\ib(M).\]
\end{lemma}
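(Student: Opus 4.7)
The plan is to invoke the long exact sequence of local cohomology $(H_\ib^i)_{i\in\Z}$ associated with the short exact sequence
\[0\longrightarrow\Gamma_\ia(M)\longrightarrow M\longrightarrow M/\Gamma_\ia(M)\longrightarrow 0.\]
This produces the long exact sequence
\[0\to\Gamma_\ib(\Gamma_\ia(M))\to\Gamma_\ib(M)\to\Gamma_\ib(M/\Gamma_\ia(M))\to H_\ib^1(\Gamma_\ia(M))\to\cdots.\]

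The hypothesis $\Gamma_\ib\circ\Gamma_\ia=\Gamma_\ia$ gives at once $\Gamma_\ib(\Gamma_\ia(M))=\Gamma_\ia(M)$, which is the first key ingredient and, moreover, exhibits $\Gamma_\ia(M)$ as $\Gamma_\ib$ applied to itself. Since $\ib$ is weakly proregular, \ref{wr} B) yields $H_\ib^1\circ\Gamma_\ib=0$, and consequently
\[H_\ib^1(\Gamma_\ia(M))=H_\ib^1(\Gamma_\ib(\Gamma_\ia(M)))=0.\]
This is the second key ingredient, and the heart of the argument; everything else is formal.

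With these two observations the long exact sequence truncates to the short exact sequence
\[0\longrightarrow\Gamma_\ia(M)\longrightarrow\Gamma_\ib(M)\longrightarrow\Gamma_\ib(M/\Gamma_\ia(M))\longrightarrow 0,\]
which identifies $\Gamma_\ib(M/\Gamma_\ia(M))$ with the submodule $\Gamma_\ib(M)/\Gamma_\ia(M)$ of $M/\Gamma_\ia(M)$. The desired isomorphism then follows from the third isomorphism theorem:
\[(M/\Gamma_\ia(M))/\Gamma_\ib(M/\Gamma_\ia(M))\cong(M/\Gamma_\ia(M))/(\Gamma_\ib(M)/\Gamma_\ia(M))\cong M/\Gamma_\ib(M).\]
There is no substantial obstacle: the vanishing $H_\ib^1(\Gamma_\ia(M))=0$ is precisely what the weak proregularity hypothesis on $\ib$ is designed to supply, once the inclusion $\Gamma_\ia(M)\subseteq\Gamma_\ib(M)$ encoded in the hypothesis $\Gamma_\ib\circ\Gamma_\ia=\Gamma_\ia$ has been exploited.
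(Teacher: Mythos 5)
Your proof is correct and follows essentially the same route as the paper: apply the $H_\ib^\bullet$ long exact sequence to $0\to\Gamma_\ia(M)\to M\to M/\Gamma_\ia(M)\to 0$, use $\Gamma_\ib\circ\Gamma_\ia=\Gamma_\ia$ together with $H_\ib^1\circ\Gamma_\ib=0$ (from \ref{wr} B)) to kill $H_\ib^1(\Gamma_\ia(M))$, and conclude. You merely make explicit the final step (identifying $\Gamma_\ib(M/\Gamma_\ia(M))$ with $\Gamma_\ib(M)/\Gamma_\ia(M)$ and applying the third isomorphism theorem) which the paper leaves as ``the claim follows from this.''
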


\begin{proof}
Applying the functor $\Gamma_\ib$ to the short exact sequence \[0\longrightarrow\Gamma_\ia(M)\longrightarrow M\longrightarrow M/\Gamma_\ia(M)\longrightarrow 0\] and keeping in mind that $\Gamma_\ib\circ\Gamma_\ia=\Gamma_\ia$ we get an exact sequence \[0\longrightarrow\Gamma_\ia(M)\longrightarrow\Gamma_\ib(M)\longrightarrow\Gamma_\ib(M/\Gamma_\ia(M))\longrightarrow H_\ib^1(\Gamma_\ia(M)).\] By \ref{wr} B) we have $H_\ib^1\circ\Gamma_\ib=0$, hence by our hypothesis $H_\ib^1(\Gamma_\ia(M))=0$. The claim follows from this.\footnote{It is seen from the proof that in fact we do not need $\ib$ to be weakly proregular, but rather to have a finite generating family $\mathbf{b}$ such that the canonical morphism $\gamma^1_{\mathbf{b}}\colon H_\ib^1\rightarrow\check{H}^1_{\mathbf{b}}$ is an isomorphism.}
\end{proof}

\begin{prop}\label{f50}
Let $\ib,\ic\subseteq R$ be weakly proregular ideals such that $\ia=\ib+\ic$, and let $M$ be an $R$-module. If $M$ is weakly $\ib$-fair and weakly $\ic$-fair, then it is weakly $\ia$-fair.
\end{prop}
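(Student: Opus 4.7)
The plan is to prove the non-trivial inclusion $\assf_R(M/\Gamma_\ia(M))\subseteq\assf_R(M)\setminus\var(\ia)$, since the reverse one is \ref{b10} d).

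I would first record two preliminary facts. Because $\ib$ and $\ic$ are weakly proregular they are of finite type (\ref{wr}), hence so is $\ia=\ib+\ic$, and \ref{c20} therefore yields
\[\assf_R(M/\Gamma_\ia(M))\cap\var(\ia)=\emptyset.\quad(\star)\]
Moreover, since $\ib\subseteq\ia$, every $\ia$-torsion element is $\ib$-torsion, so $\Gamma_\ib\circ\Gamma_\ia=\Gamma_\ia$; Lemma \ref{f45} therefore applies and delivers a short exact sequence
\[0\longrightarrow\Gamma_\ib(M/\Gamma_\ia(M))\longrightarrow M/\Gamma_\ia(M)\longrightarrow M/\Gamma_\ib(M)\longrightarrow 0,\]
and the analogous one with $\ic$ in place of $\ib$.

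Now let $\ip\in\assf_R(M/\Gamma_\ia(M))$. Applying \ref{a20} D) to the displayed sequence and using the weak $\ib$-fairness of $M$ we obtain
\[\ip\in\assf_R(\Gamma_\ib(M/\Gamma_\ia(M)))\cup(\assf_R(M)\setminus\var(\ib)),\]
and by \ref{b10} b) the first term is contained in $\var(\ib)$. If $\ip$ lies in the second piece we are done, since $\assf_R(M)\setminus\var(\ib)\subseteq\assf_R(M)\setminus\var(\ia)$. Otherwise $\ib\subseteq\ip$, and then $(\star)$ forces $\ic\not\subseteq\ip$. The symmetric argument with $\ic$ in place of $\ib$ now gives $\ip\in\assf_R(\Gamma_\ic(M/\Gamma_\ia(M)))\cup(\assf_R(M)\setminus\var(\ic))$, and since the first set is contained in $\var(\ic)$ while $\ic\not\subseteq\ip$, necessarily $\ip\in\assf_R(M)\setminus\var(\ic)\subseteq\assf_R(M)\setminus\var(\ia)$.

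The only non-routine ingredient is the double invocation of Lemma \ref{f45}, which is where weak proregularity is genuinely used: it is through $H^1_\ib\circ\Gamma_\ib=0=H^1_\ic\circ\Gamma_\ic$ from \ref{wr} B) that the relevant connecting map vanishes and the short exact sequences above exist. Once those two sequences are in place, the proof reduces to a clean dichotomy on whether $\ib$ (respectively $\ic$) is contained in $\ip$, arranged so that the combinatorially impossible case $\ip\supseteq\ib+\ic=\ia$ is ruled out by $(\star)$.
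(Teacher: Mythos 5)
Your proof is correct and follows essentially the same route as the paper: \ref{c20} to exclude $\var(\ia)$, Lemma \ref{f45} (via $\Gamma_\ib\circ\Gamma_\ia=\Gamma_\ia$, which holds since $\ib\subseteq\ia$) together with \ref{a20} D), \ref{b10} b) and weak $\ib$-/$\ic$-fairness. The only cosmetic difference is that the paper reduces at once to the case $\ip\notin\var(\ib)$ ``without loss of generality'', while you spell this out as a dichotomy and run the symmetric $\ic$-argument in the second case.
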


\begin{proof}
Let $\ip\in\assf_R(M/\Gamma_\ia(M))$. Since $\ia$ is of finite type this implies $\ip\notin\var(\ia)$ by \ref{c20}, and so we may without loss of generality suppose that $\ip\notin\var(\ib)$. Now, we have \[\assf_R(M/\Gamma_\ia(M))\subseteq\assf_R(\Gamma_\ib(M/\Gamma_\ia(M)))\cup\assf_R((M/\Gamma_\ia(M))/\Gamma_\ib(M/\Gamma_\ia(M)))\] and $\assf_R(\Gamma_\ib(M/\Gamma_\ia(M)))\subseteq\var(\ib)$ by \ref{a20} D) and \ref{b10} b). On use of \ref{f45} and our hypothesis it follows that \[\ip\in\assf_R((M/\Gamma_\ia(M))/\Gamma_\ib(M/\Gamma_\ia(M)))=\assf_R(M/\Gamma_\ib(M))\subseteq\assf_R(M).\] Thus, \ref{b10} d) yields the claim. 
\end{proof}

\begin{prop}\label{f60}
If there exists a generating family $(a_i)_{i=1}^n$ of $\ia$ such that $(a_i)_{i=1}^m$ is weakly proregular for every $m\in[1,n-1]$ and that $a_i$ is weakly proregular for every $i\in[2,n]$, then every $R$-module is weakly $\ia$-fair.
\end{prop}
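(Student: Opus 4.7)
The plan is to proceed by induction on $n$, using \ref{f40} as the base case and \ref{f50} as the inductive step applied to the splitting $\ia=\ib+\ic$ where $\ib=\langle a_1,\ldots,a_{n-1}\rangle_R$ and $\ic=\langle a_n\rangle_R$.

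For the base case $n=1$, the ideal $\ia=\langle a_1\rangle_R$ is principal, so \ref{f40} immediately gives that every $R$-module is weakly $\ia$-fair (the hypotheses on intervals $[1,0]$ and $[2,1]$ are vacuous and no weak proregularity is needed here).

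For the inductive step, assume the statement holds for generating families of length $n-1$ and suppose $(a_i)_{i=1}^n$ satisfies the stated hypotheses. Set $\ib=\langle a_1,\ldots,a_{n-1}\rangle_R$ and $\ic=\langle a_n\rangle_R$, so that $\ia=\ib+\ic$. I would verify that both are weakly proregular: the sequence $(a_1,\ldots,a_{n-1})$ is weakly proregular by hypothesis (taking $m=n-1\in[1,n-1]$), whence $\ib$ is weakly proregular; and $a_n$ is weakly proregular by hypothesis (taking $i=n\in[2,n]$), whence $\ic$ is weakly proregular. Next, the generating family $(a_i)_{i=1}^{n-1}$ of $\ib$ satisfies: $(a_i)_{i=1}^m$ is weakly proregular for every $m\in[1,n-2]$, and $a_i$ is weakly proregular for every $i\in[2,n-1]$, since $[1,n-2]\subseteq[1,n-1]$ and $[2,n-1]\subseteq[2,n]$ and the inherited conditions are precisely the original ones restricted to smaller indices. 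Thus by the induction hypothesis every $R$-module is weakly $\ib$-fair. Since $\ic$ is principal, \ref{f40} gives that every $R$-module is weakly $\ic$-fair.

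Finally, for an arbitrary $R$-module $M$, both $M$ is weakly $\ib$-fair and weakly $\ic$-fair, $\ib$ and $\ic$ are weakly proregular, and $\ia=\ib+\ic$; thus \ref{f50} applies and yields that $M$ is weakly $\ia$-fair, completing the induction.

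No step here is really an obstacle, as the work has been front-loaded into \ref{f40}, \ref{f45}, and \ref{f50}; the only point requiring a modicum of care is checking that the hypotheses on the generating family restrict correctly to the truncated family $(a_1,\ldots,a_{n-1})$, which is immediate from the set-theoretic inclusion of the relevant index intervals.
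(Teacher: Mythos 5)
Your proof is correct and follows essentially the same route as the paper: induction on $n$, splitting $\ia=\langle a_1,\ldots,a_{n-1}\rangle_R+\langle a_n\rangle_R$, using \ref{f40} for the principal pieces and \ref{f50} for the inductive step. The only (cosmetic) difference is that the paper also covers the trivial case $n=0$ via \ref{b40} B), whereas you start the induction at $n=1$; your explicit verification that the hypotheses restrict to the truncated family is exactly what the paper leaves implicit.
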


\begin{proof}
We show the claim by induction on $n$. If $n\leq 1$ then we are done by \ref{b40} B) and \ref{f40}. Let $n>1$, and suppose the claim to hold for strictly smaller values of $n$. Let $\ib=\langle a_1,\ldots,a_{n-1}\rangle_R$ and $\ic=\langle a_n\rangle_R$. Then, $\ib$ and $\ic$ are weakly proregular, and every $R$-module is weakly $\ib$-fair and weakly $\ic$-fair by our hypothesis. Therefore, \ref{f50} implies that every $R$-module is weakly $\ia$-fair, and thus the claim is proven.
\end{proof}

\begin{prop}\label{g40}
Let $\ia$ be of finite type, and let $M$ be an $R$-module such that every (weakly) associated prime of $M/\Gamma_\ia(M)$ is of finite type. Then, $M$ is (weakly) $\ia$-fair.
\end{prop}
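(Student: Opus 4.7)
The plan is to handle the weakly $\ia$-fair case by a short localization argument and then to deduce the $\ia$-fair case by passing through $\assf$ via Proposition \ref{a20} A). The ``$\supseteq$''-inclusions in both required equalities are already contained in Proposition \ref{b10} c), d), and Proposition \ref{c20}, applicable since $\ia$ is of finite type, guarantees $\assf_R(M/\Gamma_\ia(M))\cap\var(\ia)=\emptyset$; this already rules out membership in $\var(\ia)$ for every candidate $\ip$.

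For the weakly $\ia$-fair case, fix $\ip\in\assf_R(M/\Gamma_\ia(M))$. Proposition \ref{a20} F) applied with $S=R\setminus\ip$ yields $\ip_\ip\in\assf_{R_\ip}((M/\Gamma_\ia(M))_\ip)$. Exactness of localization together with Proposition \ref{tor} D), applicable because $\ia$ is of finite type, provides the identification $(M/\Gamma_\ia(M))_\ip\cong M_\ip/\Gamma_{\ia_\ip}(M_\ip)$. But $\ip\notin\var(\ia)$ forces $\ia\not\subseteq\ip$, so $\ia_\ip=R_\ip$; and a direct check from the definition shows $\Gamma_{R_\ip}(N)=0$ for every $R_\ip$-module $N$, since $1\in R_\ip$ makes $(0:_N R_\ip)=0$. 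Hence $\Gamma_{\ia_\ip}(M_\ip)=0$ and $(M/\Gamma_\ia(M))_\ip\cong M_\ip$, giving $\ip_\ip\in\assf_{R_\ip}(M_\ip)$; applying Proposition \ref{a20} F) in the opposite direction yields $\ip\in\assf_R(M)$, as needed.

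The $\ia$-fair case reduces to the weak case via \ref{a20} A): if $\ip\in\ass_R(M/\Gamma_\ia(M))$, then $\ip\in\assf_R(M/\Gamma_\ia(M))$ since $\ass\subseteq\assf$ always, so the previous paragraph places $\ip\in\assf_R(M)$; the finite-typeness of $\ip$ then gives $\ip\in\ass_R(M)$. Together with \ref{b10} c) and \ref{c20}, this closes both equalities.

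The main obstacle is the identification $(M/\Gamma_\ia(M))_\ip\cong M_\ip$, which rests simultaneously on \ref{tor} D) (the finite-type passage of $\Gamma_\ia$ through localization) and on the collapse $\Gamma_{R_\ip}(M_\ip)=0$; the rest is formal. I note in passing that this argument does not in fact use the finite-type hypothesis on $\ip$ for the weakly $\ia$-fair case -- it is only essential for the $\ia$-fair case, where it is needed to transfer between $\ass$ and $\assf$ via \ref{a20} A).
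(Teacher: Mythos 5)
Your proposal is correct, but it takes a genuinely different route from the paper. The paper first reduces the weak statement to the one about assassins via \ref{a20} A), and then proves the latter by embedding $M/\Gamma_\ia(M)$ into $\ilim_{n\in\N}\hm{R}{\ia^n}{M}$ (\ref{tor} F)) and combining \ref{a20} D), E), H) with \ref{c20}; there the finite-type hypothesis on the primes is exactly what makes \ref{a20} E) (assassins of filtering inductive limits) applicable. You argue in the opposite direction: you prove the weak statement first, by localizing at a given $\ip\in\assf_R(M/\Gamma_\ia(M))$ -- using \ref{c20} to get $\ia\not\subseteq\ip$, \ref{tor} D) together with $\Gamma_{R_\ip}=0$ to identify $(M/\Gamma_\ia(M))_\ip\cong M_\ip$, and both directions of the bijection in \ref{a20} F) -- and only then deduce the assassin statement from \ref{a20} A), which is where the finite-typeness of the primes enters. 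All steps are covered by the cited results (for the return through \ref{a20} F) one should note that the preimage of $\ip_\ip$ must be $\ip$ itself, as $\iq\mapsto\iq_\ip$ is injective on primes contained in $\ip$). What your route buys is precisely your closing remark: the weak half uses no hypothesis on the primes, so it yields that every $R$-module is weakly $\ia$-fair once $\ia$ is of finite type; one can even see this without localization, since for $\ip\in\min(0:_R\overline{y})$ and $b\in\ia\setminus\ip$, every prime $\iq$ with $(0:_Ry)\subseteq\iq\subseteq\ip$ contains $(0:_R\overline{y})$ (for $a\in(0:_R\overline{y})$ there is $n$ with $b^na\in(0:_Ry)\subseteq\iq$, and $b\notin\iq$ gives $a\in\iq$), whence $\iq=\ip$ and $\ip\in\assf_R(M)$. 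Since this would subsume \ref{f40}, \ref{f50}, \ref{f60} and settle the ``weakly $\ia$-fair'' part of the paper's closing question, that observation deserves especially careful checking -- but I see no gap in it. The paper's route, by contrast, stays within the stated hypotheses and avoids any reliance on the localization behaviour of weak assassins.
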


\begin{proof}
By \ref{a20} A) it suffices to show the claim about assassins. By \ref{tor} F) we have a monomorphism $M/\Gamma_\ia(M)\rightarrowtail\ilim_{n\in\N}\hm{R}{\ia^n}{M}$, and so \ref{a20} D), E) and H) imply \[\ass_R(M/\Gamma_\ia(M))\subseteq\{\ip\in\ass_R(\ilim_{n\in\N}\hm{R}{\ia^n}{M})\mid\ip\text{ is of finite type}\}\subseteq\]\[\bigcup_{n\in\N}\ass_R(\hm{R}{\ia^n}{M})\subseteq\ass_R(M).\] The claim follows now from \ref{c20}.
\end{proof}

\begin{qus}
We are left with the following questions, on whose answers we dare not speculate.
\begin{aufz}
\item[$(*)$] Suppose $\Gamma_\ia$ is a radical. Is then every $R$-module weakly $\ia$-quasifair, $\ia$-fair or weakly $\ia$-fair?
\item[$(**)$] Suppose $\ia$ is of finite type. Is then every $R$-module $\ia$-fair or weakly $\ia$-fair?
\end{aufz}
\end{qus}


\textbf{Acknowledgements.} I thank the referee for his careful reading and his suggestions, and -- as so often -- Pham Hung Quy for suggesting nice counterexamples.


\end{document}